\newcommand{\mylabel}[2]{#2\def\@currentlabel{#2}\label{#1}}
\tikzstyle{decision} = [diamond, minimum width=0.5cm, minimum height=0.5cm, text centered, draw=black, very thick, text width=4cm, aspect=2]
 \tikzstyle{algorithmStep} = [rectangle, minimum width=3cm, minimum height=1cm,text centered, draw=black, very thick, text width=5cm]
 \tikzstyle{algorithmStep2} = [rectangle, minimum width=3cm, minimum height=1cm,text centered, draw=black, very thick, text width=2cm]
 \tikzstyle{arrow} = [thick,->,>=stealth]
 \tikzstyle{input} = [trapezium, trapezium left angle = 65, trapezium right angle = 115, minimum width=3cm, minimum height=1cm,text centered, draw=black, very thick, text width=2cm]
\tikzstyle{decision2} = [diamond, minimum width=0.25cm, minimum height=0.25cm, text centered, draw=black, very thick, text width=2cm, aspect=2]
\newcommand{\x}{\boldsymbol{x}}
\newcommand{\twodbounds}[4]{\makecell[tl]{$x_1^L$ : #1 \\ $x_1^U$ : #2 \\ $x_2^L$ : #3 \\ $x_2^U$ : #4}}
\newcommand{\twodboundsundef}[4]{\makecell[tl]{$x_1^{L*}$ : #1 \\ $x_1^{U*}$ : #2 \\ $x_2^{L*}$ : #3 \\ $x_2^{U*}$ : #4}}
\newcommand{\threedbounds}[6]{\makecell[tl]{$x_1^L$ : #1 \\ $x_1^U$ : #2 \\ $x_2^L$ : #3 \\ $x_2^U$ : #4 \\ $x_3^L$ : #5 \\ $x_3^U$ : #6}}
\DeclareMathOperator*{\argmin}{\arg\!\min}
\newenvironment{lem}[1]
  {\innercustomlem}
  {\endinnercustomlem}
\newtheorem{obs}{Observation}
\newtheorem{rem}{Remark}
\newtheorem{property}{Property}
\newtheorem{assump}{Assumption}
\title{Constructing Tight Quadratic Relaxations for Global Optimization: I. Outer-Approximating Twice-Differentiable Convex Functions}
\author{William~R.~Strahl$^{1,2}$}
\author{Arvind~U.~Raghunathan$^3$}
\author{Nikolaos~V.~Sahinidis$^{2,4}$}
\author{Chrysanthos!E.~Gounaris$^{1,2}$\thanks{Corresponding author: gounaris@cmu.edu}}
\affil{\small $^1$Department of Chemical Engineering, Carnegie Mellon University, Pittsburgh, PA, 15213, USA \\ 
			  $^2$Center for Advanced Process Decision-making, Carnegie Mellon University, Pittsburgh, PA, 15213, USA \\ 
			  $^3$Mitsubishi Electric Research Labs, Cambridge, MA, 02139, USA \\ 
			  $^4$H.~Milton Stewart School of Industrial~\& Systems Engineering and School of Chemical~\& Biomolecular Engineering, Georgia Institute of Technology, Atlanta, GA, 30332, USA}
\date{}
\begin{document}

\maketitle

\begin{abstract}
When computing bounds, spatial branch-and-bound algorithms often linearly outer approximate convex relaxations for non-convex expressions in order to capitalize on the efficiency and robustness of linear programming solvers. Considering that linear outer approximations sacrifice accuracy when approximating highly nonlinear functions and recognizing the recent advancements in the efficiency and robustness of available methods to solve optimization problems with quadratic objectives and constraints, we contemplate here the construction of quadratic outer approximations of twice-differentiable convex functions for use in deterministic global optimization.
To this end, we present a novel cutting-plane algorithm that determines the tightest scaling parameter, $\alpha$, in the second-order Taylor series approximation quadratic underestimator proposed by~\citet{su2018improved}. We use a representative set of convex functions extracted from optimization benchmark libraries to showcase--qualitatively and quantitatively--the tightness of the constructed quadratic underestimators and to demonstrate the overall computational efficiency of our algorithm. Furthermore, we extend our construction procedure to generate even tighter quadratic underestimators by allowing overestimation in infeasible polyhedral regions of optimization problems, as informed by the latter's linear constraints.

\noindent \textbf{Keywords:} deterministic global optimization, outer approximation, quadratic underestimation, cutting-plane algorithm
\end{abstract}

\section{Introduction}\label{sec:introduction}
As evidenced by the vast diversity of application areas present in the literature, deterministic global optimization (DGO) algorithms are utilized to solve problems to guaranteed global optimality in many different fields, including biomedical and biological engineering, computational chemistry, computational geometry, finance, process networks, and others. A brief, yet diverse subset of specific applications where deterministic DGO algorithms have been applied includes bioreactors~\citep{misener2014global}, protein design~\citep{klepeis2004design}, phase equilibrium~\citep{pereira2010duality}, membrane-based separation systems~\citep{gounaris2013estimation}, investment portfolio selection~\citep{rios2010portfolio}, and crude oil scheduling~\citep{mouret2011new}. Recent applications include optimizing heat integration in flowsheets~\citep{fahr2022simultaneous}, parameter estimation problems in the chemical industry~\citep{sass2023towards}, and cost minimization of water distribution networks~\citep{cassiolato2023minlp}, to name but a few. Indeed, advancing DGO techniques produces enhanced algorithms that can be utilized in a plethora of problems encountered across a vast array of research fields.

Many DGO algorithms extensively utilize supporting hyperplanes (i.e., first-order Taylor series approximations) for nonlinear convex functions, which have guarantees from convexity theory to underestimate convex functions over the entire domain (e.g.,~\citep{tawarmalani2005polyhedral}). Most notably, spatial branch-and-bound algorithms, which repeatedly construct and refine convex relaxations of the typically non-convex feasible space, often opt for linear outer-approximations of those relaxations at the interest of solving them faster and more reliably. 
Whereas they supply valid underestimators necessary for rigorous guarantees of optimality and are very inexpensive to construct, linear outer-approximators can poorly approximate highly nonlinear functions. More broadly, directly embedding the nonlinear convex functions into any intermediate computation of a DGO algorithm immediately elevates the difficulty of the problem to solving general convex optimization problems, for which efficient methods exist, but are computationally less efficient and/or less robust compared to linear programming (LP) algorithms.
Moreover, recent advances in the algorithms solving problems with quadratic objectives and/or constraints~\citep{hansmittlemann} suggests an intermediate option: using a nonlinear quadratic function as an outer approximator to more accurately approximate nonlinear functions, while at the same time maintaining a lower level of difficulty than general nonlinear convex programming. This paradigm has been applied, for example, in a decomposition algorithm for convex Mixed Integer Nonlinear Programming (MINLP) problems, where an Outer Approximation (OA) scheme generates quadratic cuts defining a Mixed Integer Quadratically Constrained Quadratic Program (MIQCQP) relaxed master problem~\citep{su2018improved}. Furthermore, any algorithm or technique that linearly outer approximates convex relaxations could instead consider quadratic outer approximations, which provide an intermediate alternative within the complexity gap between nonlinear convex programming and linear programming, allowing for enhanced accuracy of relaxation solutions at a greater computational cost. In this context, another possible application lies in optimization-based bounds tightening (OBBT)~\citep{puranik2017domain,bynum2021decomposition}, which tightens bounds on variables by minimizing and maximizing variables subject to a relaxation of the problem. Rather than utilizing a linear outer approximation of a convex relaxation and solving LPs, tighter bounds could be achieved by solving the OBBT problems over a quadratic outer approximation of the convex relaxation. 
The broadly applicable principle of replacing linear outer approximations with quadratic outer approximations in DGO algorithms motivates this work to provide a methodology for the construction of tight quadratic outer approximators for twice-differentiable convex functions. We emphasize that our contributions are not restricted to any particular algorithm or technique, but apply generally to all contexts where outer approximation is relevant for rigorous optimality guarantees.

Quadratic outer approximators (a.k.a.``quadratic underestimators''; terms used interchangeably in this work) have been proposed in the literature and utilized in solving optimization problems. \citet{buchheim2013quadratic} solve general convex integer programs by using a point-wise maximum quadratic surrogate for nonlinear functions. In their distinct approach the authors construct quadratic outer approximators for a specific class of functions where the Hessian, $Q$, of a second-order Taylor series approximation is selected to satisfy $\nabla^2 f(x) \succcurlyeq Q$ for all $x \in \mathbb{R}^n$. While identifying a $Q$ that satisfies this condition a priori is not in general applicable to nonlinear functions, the underestimator possesses the property that the same Hessian, $Q$, can be reused for any point of construction in the domain without modification; once identified, construction is cheap and tighter than linear, but not as tight as possible. The authors acknowledge the difficulty of finding a suitable $Q$ for general nonlinear functions and suggest determining $Q$ for additional specific classes of functions as future work~\citep{buchheim2013quadratic}.

\citet{su2018improved} construct quadratic outer approximators for use in outer approximation algorithms for convex MINLP problems, where the cuts added to the master problem are convex quadratic instead of linear. Rather than selecting the same Hessian for the second-order term in a Taylor series approximation, the authors propose modifying the Hessian uniquely for each point of construction, thereby achieving tighter quadratic outer approximators. Their modified Taylor series approximation at a point of construction $x_0$ takes the form $f(x_0) + \nabla f(x_0)(x-x_0) + \frac{1}{2}(x-x_0)^\top \alpha\nabla^2 f(x_0)(x-x_0)$, where the scalar $\alpha$ modifies the second-order term to achieve underestimation over a pre-specified box domain $\mathcal{B}$.
Acknowledging the difficulty of solving the non-convex optimization problem (\ref{eq:suetalopt}) to determine the tightest possible value for $\alpha$, the authors observe that, in the case when the gradient of the objective function in (\ref{eq:suetalopt}) is coordinate-wise monotonic (i.e., each component of the gradient is monotonic over the feasible space), then the tightest possible choice for $\alpha$ (and the solution to (\ref{eq:suetalopt})) lies at a vertex of the box domain. However, while providing a procedure for identifying the tightest $\alpha$ for functions that satisfy this coordinate-wise monotonicity property,~\citet{su2018improved} leave open the problem of determining the tightest $\alpha$ for general nonlinear convex functions. 
\begin{equation}
\label{eq:suetalopt}
\alpha = \min_{x \in \mathcal{B} \setminus \{x_0\}} \frac{f(x) - f(x_0) - \nabla f(x_0)(x-x_0)}{\frac{1}{2}(x-x_0)^\top \nabla^2 f(x_0)(x-x_0)}
\end{equation}

\citet{olama2023distributed} consider an outer approximation algorithm that employs a quadratic underestimator of the form $f(x_0) + \nabla f(x_0)(x-x_0) + \frac{m}{2}(x-x_0)^\top (x-x_0)$, where $f$ is strongly convex given a suitable choice of $m$.
Closely resembling the approach proposed in~\citet{buchheim2013quadratic}, this methodology provides a quadratic underestimator that applies to any point of construction for a function, but also possesses the drawback that computing $m$ is only straightforward for a restricted class of functions (e.g., convex quadratic functions in problems of learning and control). Integrated into the outer approximation algorithm, the authors claim that an advantage of their quadratic outer approximator over the one proposed by~\citet{su2018improved} is that, by utilizing the strong convexity parameter, the Hessian of the quadratic constraint included in the outer approximation master problem is a diagonal matrix, and thereby less dense and more efficient to solve by available solvers~\citep{olama2023distributed}. Finally, the authors assert that integrating the quadratic outer approximation of the objective function of the learning and control type of problems studied (e.g., classification, optimal and predictive control, regression) computes tighter lower bounds for the outer approximation algorithm and facilitates faster convergence.

Recent work by~\citet{streeter2022automatically} develops the theory to construct polynomial under- and over-estimators of functions using so-called Taylor polynomial enclosures. Additionally, the authors present a detailed software implementation for their numerical procedure, named ``AutoBound'', which can construct quadratic under- and over-estimators for general non-convex functions, but provides no guarantee on the convexity of the resulting estimators. Furthermore, ~\citet{streeter2023sharp} show that the analytic expressions, rather than numerical procedures, can generate the \textit{tightest} estimators for univariate functions composed of functions with a monotonic second derivative (in the case of using quadratics as estimators). Considered in the context of the quadratic underestimator proposed in~\citet{su2018improved}, for univariate functions composed of functions with a monotonic second derivative, these analytic expressions identify  \textit{a priori} which vertex is the solution of~(\ref{eq:suetalopt}) and determine the tightest scaling parameter $\alpha$.

Despite all this recent research activity, we highlight that, to the best of our knowledge, the literature lacks a procedure to construct tight \textit{convex} quadratic underestimators of twice-differentiable convex functions in general. To this end, we provide the following distinct contributions:
\begin{itemize}
	\item We set forth a cutting-plane algorithm that numerically determines the tightest value of $\alpha$ in the underestimator proposed by~\citet{su2018improved} for general twice-differentiable convex functions of low dimensions.
	\item We provide qualitative and quantitative evidence of the quality of the quadratic underestimators and demonstrate the efficiency of the cutting-plane algorithm on a set of representative functions extracted from optimization benchmark libraries.
	\item We extend the cutting-plane algorithm to generate even tighter quadratic underestimators of functions present in optimization problems by allowing overestimation in infeasible regions, as informed by linear constraints in those problems.
\end{itemize}

The rest of the paper is organized as follows. In Section~\ref{sec:Methodology}, we present our approach for constructing tight quadratic underestimators of general twice-differentiable convex functions, while in Section~\ref{sec:computational_results} we present results on a library of convex functions. In Section~\ref{sec:externalLinearConstraints}, we show how to construct even tighter underestimators in the context of optimization problems with linear constraints, illustrating this extension with a representative example. Finally, in Section~\ref{sec:conclusions}, we present some conclusions from our work.

\section{Methodology} \label{sec:Methodology}
For underestimating a twice-differentiable convex function, $f : \mathbb{R}^n \to \mathbb{R}$, let $\mathcal{B} := \{\x \in \mathbb{R}^n : x^L_i \le x_i \le x^U_i \ \forall i = 1,...,n\}$ denote the box defined by given bounds $x^L_i$ and $x^U_i$ on the components of $\x$, and let $\x_0 \in \mathcal{B}$ be a point at which to construct a quadratic underestimator, 
\begin{equation}
\label{eq:quadform}
q(\x; \alpha, \x_0) := f(\x_0) + \nabla f(\x_0)(\x-\x_0) + \frac{1}{2}(\x-\x_0)^\top \alpha\nabla^2 f(\x_0)(\x-\x_0),
\end{equation}
where $\alpha$ is a scaling parameter selected to satisfy $q(\x; \alpha, \x_0) \le f(\x)$ for all $\x \in \mathcal{B}$. We note that herein and in the remainder of the paper, we denote vectors in bold.

Adopting the second-order Taylor series approximation at $\x_0$ for the quadratic underestimator~(\ref{eq:quadform}), as presented in~\citet{su2018improved}, we seek to solve the optimization problem~(\ref{eq:quadopt1}). Here, we note that $\alpha$ is necessarily bounded from above by~$1$, or else the Taylor series approximation would overestimate in the local vicinity of the point of construction.
\begin{equation}
\label{eq:quadopt1}
\begin{array}{cl}
 \max\limits_{\alpha \in [0,1]} & \alpha \\ 
 \text{s.t.}             & \min\limits_{\x \in B} \left\{f(\x) - q(\x, \alpha; \x_0)\right\} \ge 0
\end{array}
\end{equation} 

For a fixed value of $\alpha$, the minimization problem defining the feasible set of (\ref{eq:quadopt1}) is a difference of convex (d.c.) function minimization problem that, by utilizing a partial epigraph reformulation, can be cast as a concave minimization problem over a convex feasible set, as displayed in~(\ref{eq:quadoptform1}). 
\begin{equation}
\label{eq:quadoptform1}
\begin{array}{cl}
 \min\limits_{\x \in \mathcal{B}, t \in \mathbb{R}} & t - q(\x; \alpha, \x_0) \\ 
 \text{s.t.}             & f(\x) - t \le 0
\end{array}
\end{equation}

Various approaches exist in the literature for solving~(\ref{eq:quadoptform1}), including outer approximation, polyhedral annexation, successive underestimation, and branch-and-bound~\citep{horst2013handbook}. We implement an outer approximation scheme utilizing a cutting-plane algorithm to solve~(\ref{eq:quadoptform1}). Exploiting the monotonicity of (\ref{eq:quadform}) as a function of $\alpha$, as detailed in Observation~\ref{obs:mono}, we maintain a set of vertices corresponding to an outer approximation of the convex feasible region of~(\ref{eq:quadoptform1}), which we evaluate against the objective function of~(\ref{eq:quadoptform1}) to compute a bound on the overestimation of the function by the quadratic for a particular value of $\alpha$.
We then iteratively refine our outer approximation vertex set using a cutting-plane algorithm to separate vertices until the bound computed by (\ref{eq:quadoptform1}) is within some user-specified tolerance, $\varepsilon$. To guarantee convergence of the algorithm and simultaneously determine the tightest parameter $\alpha$, i.e., the solution to~(\ref{eq:quadopt1}), we initialize the cutting-plane algorithm with $\alpha = 1$ and decrement the value of $\alpha$ at each enumerated vertex of the cutting-plane algorithm if (\ref{eq:quadform}) overestimates the function. The cutting-plane algorithm is presented as a diagram in Figure~\ref{fig:cutting_plane_algo}, while detailed explanations of each step are provided in Section~\ref{sec:solution_approach_algorithm}. 

\begin{obs}
	\label{obs:mono} 
	The quadratic underestimator in (\ref{eq:quadform}) monotonically decreases as $\alpha$ decreases; that is, $q(\x; \alpha_1, \x_0) \le q(\x; \alpha_2, \x_0)$ for all $\alpha_1 < \alpha_2$.
\end{obs}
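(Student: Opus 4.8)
The plan is to exploit the fact that the constant and linear terms in $q(\x; \alpha, \x_0)$ do not depend on $\alpha$, so that the entire dependence of $q$ on the scaling parameter is confined to the second-order term. First I would form the difference $q(\x; \alpha_2, \x_0) - q(\x; \alpha_1, \x_0)$ directly from definition~(\ref{eq:quadform}). The zeroth-order term $f(\x_0)$ and the first-order term $\nabla f(\x_0)(\x-\x_0)$ cancel identically, leaving only
\begin{equation*}
q(\x; \alpha_2, \x_0) - q(\x; \alpha_1, \x_0) = \tfrac{1}{2}(\alpha_2 - \alpha_1)\,(\x-\x_0)^\top \nabla^2 f(\x_0)(\x-\x_0).
\end{equation*}

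Next I would argue that each of the two factors on the right-hand side is nonnegative. Since we assume $\alpha_1 < \alpha_2$, the scalar factor $\alpha_2 - \alpha_1$ is strictly positive. For the quadratic form, I would invoke the twice-differentiable convexity of $f$: convexity is equivalent to the Hessian $\nabla^2 f(\x_0)$ being positive semidefinite at every point, so $(\x-\x_0)^\top \nabla^2 f(\x_0)(\x-\x_0) \ge 0$ for every $\x$. Multiplying these two nonnegative quantities shows the difference is nonnegative, which is precisely $q(\x; \alpha_1, \x_0) \le q(\x; \alpha_2, \x_0)$, as claimed.

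I do not anticipate any substantive obstacle here; the statement is essentially a one-line consequence of the positive semidefiniteness of the Hessian of a convex function. The only point that warrants care is to state explicitly that the convexity hypothesis on $f$—already in force throughout the section—is exactly what guarantees the quadratic form is nonnegative; without it the monotonicity could fail. It is also worth remarking that the conclusion holds for all $\x \in \mathbb{R}^n$, not merely on the box $\mathcal{B}$, since the argument nowhere invokes the bounds defining $\mathcal{B}$.
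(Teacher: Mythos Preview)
Your proposal is correct and follows essentially the same approach as the paper: both arguments isolate the second-order term, invoke $\nabla^2 f(\x_0)\succcurlyeq 0$ from convexity to conclude the quadratic form is nonnegative, and then use $\alpha_1<\alpha_2$ to obtain the inequality. The paper likewise notes that equality holds precisely where $(\x-\x_0)^\top\nabla^2 f(\x_0)(\x-\x_0)=0$ and that the conclusion is valid for all $\x\in\mathbb{R}^n$.
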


\begin{proof}
By construction, $\nabla^2 f(\x_0) \succcurlyeq 0$, which implies that $\frac{1}{2}(\x-\x_0)^\top \nabla^2 f(\x_0) (\x-\x_0) \ge 0$ for all $\x \in \mathbb{R}^n$.
Let two values for parameter $\alpha$, namely $\alpha_1 \in [0,1)$ and $\alpha_2 \in (0,1]$ such that $\alpha_1 < \alpha_2$. We thus have:
\[\begin{array}{rl}
\alpha_1 < \alpha_2 & \Longrightarrow \frac{\alpha_1}{2}(\x-\x_0)^\top \nabla^2 f(\x_0) (\x-\x_0) \le \frac{\alpha_2}{2}(\x-\x_0)^\top \nabla^2 f(\x_0) (\x-\x_0) \\ 
 & \Longleftrightarrow q(\x; \alpha_1, \x_0) \le q(\x; \alpha_2, \x_0),
\end{array}\]
noting that equality holds only at points $\x^* \in \mathbb{R}^n$ for which $\frac{1}{2}(\x^*-\x_0)^\top \nabla^2 f(\x_0)(\x^*-\x_0) = 0$.
\end{proof}

\subsection{The Cutting-Plane Algorithm}\label{sec:solution_approach_algorithm}
\label{par:algo}
 \begin{figure}[h]
    \centering
    \resizebox{0.6\textwidth}{!}{
     \begin{tikzpicture}[node distance=4cm, scale=0.5]
      \node (initialValues) [algorithmStep] {Initialize $\alpha = 1$};
      \node (toleranceCheck) [decision, below of=initialValues, yshift=1.5cm] { LB $> -\varepsilon$?};
      \node (STOP) [algorithmStep2, right of=toleranceCheck, xshift=1.5cm] {STOP, Return $\alpha$};
      \node (separatingCut) [algorithmStep, below of=toleranceCheck, yshift=1.25cm] {Separate Vertex};
      \node (enumerateVertices) [algorithmStep, below of=separatingCut, yshift=2.0cm] {Enumerate Vertices \citep{chen1991line}};
      \node (checkVertices) [decision, below of=enumerateVertices, yshift=0.5cm] {Overestimation at Vertices?};
      \node (evaluateVertices) [algorithmStep, below of=checkVertices, yshift=1.0cm] {Evaluate Objective Function at Vertices};
      \node (decreaseAlpha) [algorithmStep2, right of=checkVertices, xshift=1.5cm] {Decrease $\alpha$};
      \node (updateLB) [algorithmStep2, left of=separatingCut, xshift=-1.0cm, yshift=-1cm] {Update LB};
      \draw [arrow] (initialValues) -- (toleranceCheck);
      \draw [arrow] (toleranceCheck) -- node[anchor=south, xshift=0.5cm, yshift=-0.25cm] {N} (separatingCut);
      \draw [arrow] (separatingCut) -- (enumerateVertices);
      \draw [arrow] (enumerateVertices) -- (checkVertices);
   	  \draw [arrow] (checkVertices) -- node[anchor=south, xshift=0.5cm, yshift=-0.25cm] {N} (evaluateVertices);
      \draw [arrow] (toleranceCheck) -- node[anchor=east, yshift=0.25cm] {Y} (STOP);
      \draw [arrow] (evaluateVertices.south)  |- ($(evaluateVertices.south) + (0,-1.5cm)$) -| (updateLB.south);
      \draw [arrow] (updateLB.north) node[anchor=west] {} |- (toleranceCheck.west);
      \draw [arrow] (checkVertices) -- node[anchor=east, yshift=0.25cm] {Y} (decreaseAlpha);
      \draw [arrow] (decreaseAlpha.south) |- ($(evaluateVertices.east)$);
     \end{tikzpicture}
    } 
    \caption{Diagram illustrating the cutting-plane algorithm.}
    \label{fig:cutting_plane_algo}
 \end{figure}
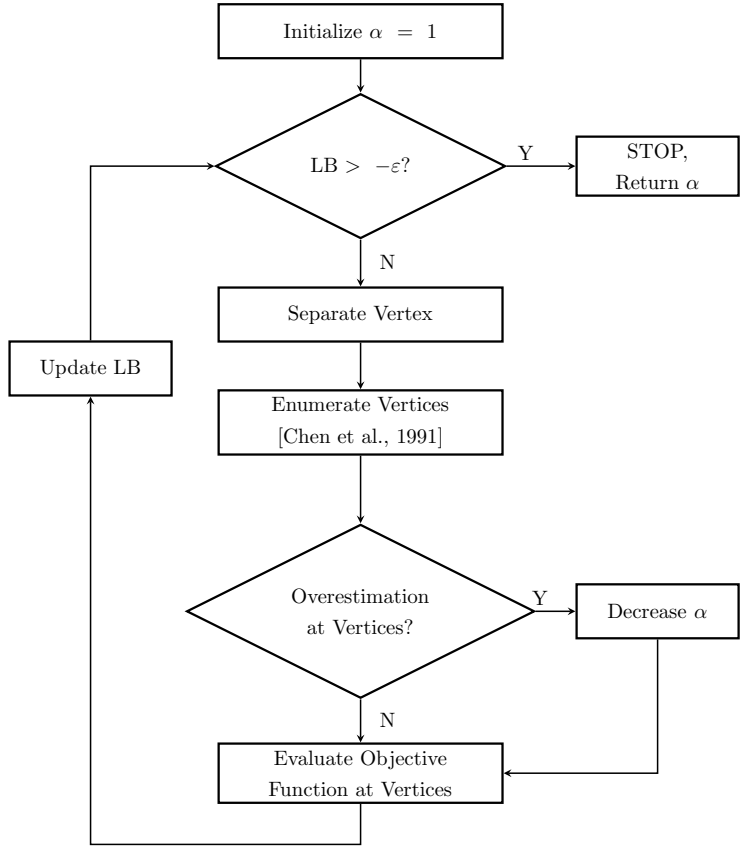

Let $\mathcal{U}^{(k)}$ be the set of vertices at iteration $k$ that outer approximates the epigraph of $f$, and let $\mathcal{A}^{(k)} \subseteq \mathcal{U}^{(k)}$ be the set of vertices to be evaluated against the objective in~(\ref{eq:quadoptform1}). Let $\mathcal{B}$ be the set of points in the continuous box domain defined by the bounds on variables~$\x$. Let $\mathcal{V}$ be the $2^n$ vertices of $\mathcal{B}$, where $n$ is the dimension of vector $\x$. 

\begin{enumerate}
\item\label{main_algo:init}
\textit{Initialize:} Set $k \gets 0$, $\alpha^{(0)} \gets 1$, and choose tolerance $\varepsilon$.\\
$\mathcal{A}^{(0)}, \mathcal{U}^{(0)} \gets \left\{ (\x,t): \x \in \mathcal{V}, t \in \{t^L, t^U\} \right\}$, where $t^L := \min_{\x \in \mathcal{B}}f(\x)$ and $t^U := \max_{\boldsymbol{v} \in \mathcal{V}}f(\boldsymbol{v})$.\\
Select a point $(\x_p, t_p) \in \mathcal{B} \times [t^L,t^U]$ such that $f(\x_p) < t_p$. \\
Initialize the vertex enumeration data structures as described by~\citet{chen1991line}.

\item\label{main_algo:convergence}
\textit{Check Convergence:} $LB = \min\limits_{(\x, t) \in \mathcal{A}^{(k)}} t - q(\x; \alpha^{(k)}, \x_0)$.\\
If $LB > -\varepsilon$, then STOP, since $f(\x) - q(\x; \alpha^{(k)}, \x_0) \ge -\varepsilon$ for all $\x \in \mathcal{B}$, and return $\alpha^{(k)}$.

\item\label{main_algo:separate_min_vertex}
\textit{Separate Vertex:} $\boldsymbol{u} \gets \argmin\limits_{(\x, t) \in \mathcal{A}^{(k)}} t - q(\x; \alpha^{(k)}, \x_0)$. $\boldsymbol{w} \gets \lambda \boldsymbol{u}$, where $\lambda$ solves $f(\lambda \x_u + (1-\lambda) \x_p) - (\lambda t_u + (1-\lambda) t_p) = 0$. Compute separating hyperplane, $\ell(\x, t) := f(\boldsymbol{w}) + \nabla f(\boldsymbol{w})((\x,t) - \boldsymbol{w}) \le 0$.

\item\label{main_algo:enumerate_vertices}
\textit{Enumerate Vertices:} $\mathcal{U}^{(k+1)} \gets (\mathcal{U}^{(k)}\cup \mathcal{H}^+) \backslash \mathcal{H}^- $, where $\mathcal{H}^- := \{(\x,t) \in \mathcal{U}^{(k)} : \ell(\x,t) > 0\}$, and $\mathcal{H}^+$ := $\text{extr}\left(\{(\x,t) \in \text{conv}(\mathcal{U}^{(k)}) : \ell(\x ,t) \le 0\}\right)$, where extr($\mathcal{S}$) is the set of extreme points of a set $\mathcal{S}$.

\item\label{main_algo:evaluate_underestimation}
\textit{Evaluate Underestimation:} \\
$\alpha^{(k)} \gets \min\limits_{(\x_v, t_v) \in \mathcal{H}^+} \left\{\frac{2\left(f(\x_v) - (f(\x_0) + \nabla f(\x_0)(\x_v-\x_0)\right)}{(\x_v-\x_0)^\top \nabla^2f(\x_0)(\x_v-\x_0)} \ : \ f(\x_v) - q(\x_v; \alpha^{(k)}, x_0) < -\varepsilon \right\}$.

\item\label{main_algo:loop}
\textit{Update $\mathcal{A}:$} $\mathcal{A}^{(k+1)} \gets \{(\x, t) \in (\mathcal{A}^{(k)} \cup \mathcal{H}^+)\backslash \mathcal{H}^- : t - q(\x; \alpha^{(k)}, \x_0) < -\varepsilon\}$. 

\item\label{main_algo:continue}
\textit{Iterate:} $k \gets k +1$. Go to Step~\ref{main_algo:convergence}.
\end{enumerate}

We make the following remarks concerning the cutting-plane algorithm.
\begin{rem}
	\label{rem:cut0}
	For a valid underestimator of form~(\ref{eq:quadform}), the optimal value of~(\ref{eq:quadoptform1}) is exactly 0 and is attained at $\x = \x_0$. The main task of the cutting-plane algorithm is to decrease $\alpha$ and increase the lower bound until the quadratic is guaranteed to overestimate the function by at worst $\varepsilon$ over the domain.
\end{rem}

\begin{rem}
	\label{rem:cut1}
	In Step~\ref{main_algo:init}, $t_L = \min_{\x \in \mathcal{B}}f(\x)$ is a convex minimization problem, which is robustly and efficiently solvable for small $n$ using readily available convex optimization algorithms, while $t_U = \max_{\x \in \mathcal{B}}f(\x)$ maximizes a convex function over a convex feasible set, which attains its maximum at a vertex. Hence, $t_U = \max_{\boldsymbol{v} \in \mathcal{V}}f(\boldsymbol{v})$.
\end{rem}

\begin{rem}
	\label{rem:cut2}
	The cutting-plane algorithm requires a point from the interior of the epigraph set, $\{(\x_p, t_p) \in \mathcal{B} \times [t^L,t^U] : f(\x_p) < t_p\}$. We select the center point of this domain as our interior point. 
\end{rem}

\begin{rem}
	\label{rem:cut3}
	The tolerance $\varepsilon$ can be adjusted to reflect the acceptable limit of overestimation and introduces a tradeoff between algorithm efficiency and underestimation correctness. For any function, subtracting the final lower bound from the quadratic provides an underestimator that is guaranteed to underestimate the function over the full domain.
\end{rem}

\begin{rem}
	\label{rem:cut4}
	In Step~\ref{main_algo:convergence}, we only need to evaluate the objective of (\ref{eq:quadoptform1}) for each vertex in $\mathcal{A}^{(k)}$ (rather than $\mathcal{U}^{(k)}$) because Observation~\ref{obs:mono} implies that any vertex where $t - q(\x; \alpha^{(k)}, \x_0) >= -\varepsilon$, i.e., the underestimating vertices in $\mathcal{U}^{(k)}$, will never satisfy $t - q(\x; \alpha^{(k)}, \x_0) < -\varepsilon$ at any future iteration of the algorithm. Hence, using $\mathcal{A}^{(k)}$ reduces the number of computations required for each iteration and thereby expedites the algorithm.
\end{rem}

\begin{rem}
	\label{rem:cut5}
	Step~\ref{main_algo:separate_min_vertex} requires solving the equation $f(\lambda \x_u + (1-\lambda) \x_p) - (\lambda t_u + (1-\lambda) t_p) = 0$ for computing scalar $\lambda \in (0,1)$. This can be achieved efficiently using the bisection method.

\end{rem}

\begin{rem}
	\label{rem:cut6}
	Step~\ref{main_algo:enumerate_vertices} enumerates vertices using the procedure outlined in~\citet{chen1991line}. We note that this procedure must maintain a set of vertices that are non-degenerate. For a cutting plane generated by the algorithm, defined as $A\boldsymbol{v} + b \le 0$, an existing vertex $\boldsymbol{v}^*$ becomes degenerate if $A\boldsymbol{v}^* + b = 0$. Whenever such degeneracy occurs, we eliminate it by applying a small perturbation to $b$, similar to the approach described in~\citet{chen1991line}.
\end{rem}

\begin{rem}
	\label{rem:cut7}
	In Step~\ref{main_algo:evaluate_underestimation}, we note the computation $\alpha^{(k)} = \frac{2\left(f(\x_v) - (f(\x_0) + \nabla f(\x_0)(\x_v-\x_0)\right)}{(\x_v-\x_0)^\top \nabla^2f(\x_0)(\x_v-\x_0)}$ is not defined where $(\x_v-\x_0)^\top \nabla^2f(\x_0)(\x_v-\x_0) = 0$ and highlight that Step~\ref{main_algo:evaluate_underestimation} only employs $\alpha^{(k)} = \frac{2\left(f(\x_v) - (f(\x_0) + \nabla f(\x_0)(\x_v-\x_0)\right)}{(\x_v-\x_0)^\top \nabla^2f(\x_0)(\x_v-\x_0)}$ when $f(\x_v) - q(\x_v; \alpha^{(k)}, x_0) < -\varepsilon$. In the case that $(\x_v-\x_0)^\top \nabla^2f(\x_0)(\x_v-\x_0) = 0$, the quadratic underestimator at $\x_v$ is contained in the first-order Taylor series approximation at $\x_0$, which is guaranteed to underestimate $f$ at $\x_v$ due to the convexity of $f$. Hence, at any point $\x_v$ where $(\x_v-\x_0)^\top \nabla^2f(\x_0)(\x_v-\x_0) = 0$, the quadratic underestimates $f$ and the update to the parameter, i.e., $\alpha^{(k)} = \frac{2\left(f(\x_v) - (f(\x_0) + \nabla f(\x_0)(\x_v-\x_0)\right)}{(\x_v-\x_0)^\top \nabla^2f(\x_0)(\x_v-\x_0)}$, will never be invoked.
\end{rem}

\begin{rem}
	\label{rem:cut8}
	The cutting-plane algorithm uses a tolerance on the lower bound of the objective of~(\ref{eq:quadoptform1}) as its convergence criterion. Alternatively, a time limit could be given to the algorithm as a termination criterion, where the lower bound upon termination could be subtracted from the final quadratic expression.
\end{rem}

\subsubsection{Algorithm Convergence}
For a proof on convergence, we refer the reader to~\citet{hoffman1981method}, where the convergence of a cutting-plane algorithm for solving a concave minimization problem over a convex set (e.g.,~(\ref{eq:quadoptform1})) is proved. In particular, the authors of that work prove three lemmas as a foundation for their convergence proof: Lemma~1 in \citet{hoffman1981method} states that the sequence of lower bounds of the objective function $\{\phi(x^k)\}$ is monotonically (but not necessarily strictly) increasing, while Lemmas~2 and~3 in \citet{hoffman1981method} deal with the procedure for constructing separating hyperplanes (Step~\ref{main_algo:separate_min_vertex}).
As we use an identical procedure for separating hyperplanes, the proofs for Lemmas~2 and~3 in~\citet{hoffman1981method} apply to our algorithm, with the inconsequential difference that the concave minimization problem~(\ref{eq:quadoptform1}) has a single nonlinear, convex constraint defining the feasible region (rather than multiple, as in the more general case proved in~\citet{hoffman1981method}). We adapt and prove Lemma~1 from~\citet{hoffman1981method} for our specific case, where in Algorithm~\ref{par:algo} we dynamically change the concave objective function by updating the value of $\alpha^{(k)}$ at Step~\ref{main_algo:evaluate_underestimation}.

\begin{lem}{1}[adapted from~\citet{hoffman1981method}] \label{lem:hoffman1}
	The sequence of lower bounds $\{\gamma(x^k, \alpha^{(k)}, \x_0) := \min\limits_{x^k \in \mathcal{A}^{(k)}} t - q(\x^k; \alpha^{(k)}, \x_0)\}$ is monotonically increasing.
\end{lem}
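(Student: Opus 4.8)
The plan is to mirror the classical lower-bound monotonicity argument for cutting-plane methods on concave minimization, but to additionally absorb the effect of the objective changing through the decrement of $\alpha^{(k)}$ in Step~\ref{main_algo:evaluate_underestimation}. Abbreviate the lower bound of iteration $k$ by $\gamma^{(k)} := \min_{(\x,t)\in\mathcal{A}^{(k)}}\left[t - q(\x;\alpha^{(k)},\x_0)\right]$, and write $g^{(k)}(\x,t) := t - q(\x;\alpha^{(k)},\x_0)$ for the objective in force at iteration $k$. First I would record that $g^{(k)}$ is \emph{concave} in $(\x,t)$: the term $t$ is linear and $q(\cdot\,;\alpha^{(k)},\x_0)$ is convex because $\alpha^{(k)}\ge 0$ and $\nabla^2 f(\x_0)\succcurlyeq 0$, so $-q$ is concave. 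Hence $g^{(k)}$ attains its minimum over any polytope at an extreme point, and since the enumeration of Step~\ref{main_algo:enumerate_vertices} maintains $\mathcal{U}^{(k)}$ as exactly the extreme-point set of $\text{conv}(\mathcal{U}^{(k)})$, we have $\min_{\mathcal{U}^{(k)}} g^{(k)} = \min_{\text{conv}(\mathcal{U}^{(k)})} g^{(k)}$.

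Next I would isolate the two monotonicity ingredients and note that they act in the same direction. The first is geometric: the cut $\ell(\x,t)\le 0$ added in Step~\ref{main_algo:separate_min_vertex} only shrinks the outer approximation, since $\mathcal{H}^+\subseteq\text{conv}(\mathcal{U}^{(k)})$ and $\mathcal{U}^{(k)}\setminus\mathcal{H}^-\subseteq\text{conv}(\mathcal{U}^{(k)})$ yield $\text{conv}(\mathcal{U}^{(k+1)}) = \text{conv}(\mathcal{U}^{(k)})\cap\{\ell\le 0\}\subseteq\text{conv}(\mathcal{U}^{(k)})$, and minimizing a fixed function over a subset can only raise the minimum. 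The second is algebraic: the update in Step~\ref{main_algo:evaluate_underestimation} never increases $\alpha$, so $\alpha^{(k+1)}\le\alpha^{(k)}$, whence Observation~\ref{obs:mono} gives $q(\x;\alpha^{(k+1)},\x_0)\le q(\x;\alpha^{(k)},\x_0)$, i.e., $g^{(k+1)}\ge g^{(k)}$ pointwise.

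I would then pass from the active set to the full vertex set and chain the inequalities. Assuming the algorithm has not yet terminated, Remark~\ref{rem:cut4} justifies $\gamma^{(k)} = \min_{\mathcal{U}^{(k)}} g^{(k)}$ and, applied at the next iteration, $\gamma^{(k+1)} = \min_{\mathcal{U}^{(k+1)}} g^{(k+1)}$, since every dropped vertex is underestimating and, by Observation~\ref{obs:mono}, remains so under the smaller $\alpha^{(k+1)}$, hence cannot attain a minimum below $-\varepsilon$. Combining the ingredients,
\[
\gamma^{(k+1)} = \min_{\mathcal{U}^{(k+1)}} g^{(k+1)} \;\ge\; \min_{\mathcal{U}^{(k+1)}} g^{(k)} = \min_{\text{conv}(\mathcal{U}^{(k+1)})} g^{(k)} \;\ge\; \min_{\text{conv}(\mathcal{U}^{(k)})} g^{(k)} = \min_{\mathcal{U}^{(k)}} g^{(k)} = \gamma^{(k)},
\]
where the first inequality uses $g^{(k+1)}\ge g^{(k)}$, the flanking equalities use concavity (minimum at a vertex), and the middle inequality uses the nesting of the feasible regions.

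The hard part will not be any individual inequality but the bookkeeping that lets me move between the active set $\mathcal{A}^{(k)}$ and the full vertex set $\mathcal{U}^{(k)}$, and across iterations, while the objective itself is changing. Concretely, I must verify that the vertices excluded from $\mathcal{A}^{(k+1)}$ in Step~\ref{main_algo:loop}---the surviving underestimating vertices of $\mathcal{U}^{(k)}$ together with any non-overestimating member of $\mathcal{H}^+$---cannot realize the minimum of $g^{(k+1)}$ when that minimum lies below $-\varepsilon$. This is exactly where Observation~\ref{obs:mono}, applied across the $\alpha$ decrement, is indispensable, and it is the feature distinguishing this adaptation from the fixed-objective argument of~\citet{hoffman1981method}.
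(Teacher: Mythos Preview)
Your proposal is correct and follows essentially the same approach as the paper: both arguments rest on the two ingredients you identify---the nesting $\text{conv}(\mathcal{U}^{(k+1)})\subseteq\text{conv}(\mathcal{U}^{(k)})$ from the cutting step, and Observation~\ref{obs:mono} to handle the $\alpha$ decrement---then combine them to conclude monotonicity. Your version is in fact more explicit than the paper's about concavity and the passage between $\mathcal{A}^{(k)}$ and $\mathcal{U}^{(k)}$, which the paper leaves implicit.
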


\begin{proof}
At each consecutive iteration of Algorithm~\ref{par:algo}, the feasible set over which $\gamma(x^k, \alpha^{(k)}, \x_0)$ is minimized contracts, i.e., if $\mathcal{S}^{(k)} := \text{conv}(\mathcal{U}^{(k)})$, where $\mathcal{U}^{(k)}$ is the set of extreme points comprising the outer approximation of the convex feasible set $\{(\x, t) : f(\x) - t \le 0\}$ at iteration $k$, then $\mathcal{S}^1 \supseteq \mathcal{S}^2 \supseteq \ldots \supseteq \mathcal{S}^{k}$, thereby ensuring that $\gamma(\x^k, \alpha^{(k)}, \x_0)$ monotonically increases in the iterations where $\alpha^{(k)}$ is not updated due to the lack of eligible vertices in Step~\ref{main_algo:evaluate_underestimation}.
Furthermore, at any iteration where Step~\ref{main_algo:evaluate_underestimation} proceeds and $\alpha^{(k)}$ is updated, Observation~\ref{obs:mono} implies that $q(\x; \alpha^{(k)}, \x_0)$ will monotonically decrease, and thereby monotonically increase $t - q(\x; \alpha^{(k)}, \x_0)$. Hence, at all iterations of Algorithm~\ref{par:algo}, $\gamma(\x^k, \alpha^{(k)}, \x_0)$ monotonically increases.
\end{proof}

Considering Lemma~\ref{lem:hoffman1} together with Lemmas~2 and~3 from~\citet{hoffman1981method}, which hold for Algorithm~\ref{par:algo} without further qualification, the convergence of Algorithm~\ref{par:algo} now follows directly from Theorem~2 in~\citet{hoffman1981method}.
Highlighting a distinct property of our algorithm from the one in~\citet{hoffman1981method}, we reiterate the fact that, for a properly constructed underestimator $q(\x; \alpha, \x_0)$, the global minimum value of~(\ref{eq:quadoptform1}) is 0 (attained at $\x=\x_0$), and so the algorithm will terminate only when all of the vertices comprising the outer approximation of the convex feasible set evaluate the objective of~(\ref{eq:quadoptform1}) at a value that is larger than the user specified tolerance, $-\varepsilon$, which equivalently guarantees the quadratic overestimates the function by at most $\varepsilon$.

\subsection{Properties of Generated Underestimators}
After setting forth a preliminary assumption, we establish two important properties of the quadratic underestimators generated by Algorithm~\ref{par:algo}.

\begin{assump}
	\label{assump:decrementAlpha}
	 Given a function $f(\x) : \mathbb{R}^n \to \mathbb{R}$ with known bounds on $\x$, we assume that, during the execution of Algorithm~\ref{par:algo}, $\alpha$ is decremented (Step~\ref{main_algo:evaluate_underestimation}) \textit{at least} once.
\end{assump}

We note that Assumption~\ref{assump:decrementAlpha} generally holds in practice, but we identify two cases where $\alpha$ may not be decremented even once.
The first is the case where the tolerance for the algorithm is selected arbitrarily large so that the condition for decrementing $\alpha$ in Step~\ref{main_algo:evaluate_underestimation} never applies. In this situation, one is advised to select a smaller tolerance and rerun the algorithm.
The other case is when $\x_0$ is at a point of $f$ where $\nabla^2 f(\x) \succcurlyeq \nabla^2 f(\x_0)$ for all $\x \in $dom$(f)$, which implies that $q(\x; \alpha, x_0)$ with $\alpha = 1$ underestimates $f(\x)$ over all points in $\mathcal{B}$.

\begin{property}
	\label{prop:coincide}
	Under Assumption~\ref{assump:decrementAlpha}, the quadratic underestimator defined by $\alpha$ will have \textit{at least} two points that coincide with $f$.
\end{property} 

\begin{proof} 
The proof is straightforward. The first point of coincidence is $\x = \x_0$, by construction. The second point of coincidence is the last point where $\alpha$ was decremented, which is guaranteed to exist by Assumption~\ref{assump:decrementAlpha}, so that $\exists \x^{\prime} \ne \x_0 \in \mathcal{B}$ s.t. $q(\x^{\prime}; \alpha^{(k)}, \x_0) = f(\x^{\prime})$ by Step~\ref{main_algo:evaluate_underestimation} of Algorithm~\ref{par:algo}. Hence, at least two points of coincidence exist.
\end{proof}

\begin{property}
	\label{prop:tightestAlpha}
	The $\alpha$ returned by Algorithm~\ref{par:algo} is the largest value of $\alpha$ that (i) overestimates $f(\x)$ at all enumerated vertices by at most $\varepsilon$ and (ii) underestimates $f(\x)$ at all points where $\alpha$ was decremented (i.e., that satisfied the condition of Step~\ref{main_algo:evaluate_underestimation} during the execution of the algorithm).
\end{property}  

\begin{proof} 
We first state that, because LB $> -\varepsilon$ at the termination of Algorithm~\ref{par:algo} and by Step~\ref{main_algo:evaluate_underestimation} in Algorithm~\ref{par:algo}, the value of $\alpha$ returned satisfies the conditions in Property~\ref{prop:tightestAlpha}. We prove by contradiction that there is no larger $\alpha$. For any $\alpha^{\prime} > \alpha$, consider the last point at which $\alpha$ was decremented, $\x^{\prime}$, which exists by Assumption~\ref{assump:decrementAlpha}. Then, because of Lemma~1 and Remark~\ref{rem:cut7}, we have that $q(\x^{\prime}; \alpha^{\prime}, \x_0) > q(\x^{\prime}; \alpha, \x_0)$. In Step~\ref{main_algo:evaluate_underestimation} at $\x^{\prime}$, we have $q(\x^{\prime}; \alpha, \x_0) = f(\x^{\prime})$, and therefore $q(\x^{\prime}; \alpha^{\prime}, \x_0) > f(\x^{\prime})$, which contradicts the second condition in Property~\ref{prop:tightestAlpha} that $q(\x^{\prime}; \alpha, \x_0)$ underestimates $f(\x)$ at all points where $\alpha$ was decremented during the execution of the algorithm.
\end{proof}

\begin{rem}
	\label{rem:prop1}
	In the case where $q(\x; 1, \x_0)$ is a valid underestimator of $f$ and Assumption~\ref{assump:decrementAlpha} does not hold, we note that, although Property~\ref{prop:coincide} may not necessarily apply, Property~\ref{prop:tightestAlpha} does because the quadratic underestimates the function at all points in the domain ($\nabla^2 f(\x) \succcurlyeq \nabla^2 f(\x_0) \ \forall \x \in \mathbb{R}^n$).
\end{rem}

\begin{rem}
	\label{rem:prop2}
	In Step~\ref{main_algo:evaluate_underestimation} of Algorithm~\ref{par:algo}, we could update $\alpha$ and allow the quadratic to overestimate the function by the specified tolerance, i.e., $\alpha^{(k)} \gets \frac{2\left(f(\x_v) + \varepsilon - (f(\x_0) + \nabla f(\x_0)(\x_v-\x_0)\right)}{(\x_v-\x_0)^\top \nabla^2f(\x_0)(\x_v-\x_0)}$, and eliminate the second condition in Property~\ref{prop:tightestAlpha} requiring underestimation at all points where $\alpha$ was decremented. We make the following two comments regarding this choice: (i) if we included the tolerance in the update, then the final returned $\alpha$ would certainly overestimate the last decremented point by $\varepsilon$, thereby invalidating Property~\ref{prop:coincide} and (ii) the ultimate objective of the algorithm is to construct underestimators, so by using our update to $\alpha$ in Step~\ref{main_algo:evaluate_underestimation}, we ensure that the quadratic underestimates at every point where the criterion of Step~\ref{main_algo:evaluate_underestimation} holds.
\end{rem}

\subsection{Effect of Functional Form on Convergence}\label{subsec:path_cases}
We qualitatively characterize and provide some intuition for pathological functions that might lead our cutting-plane algorithm to slow convergence. The termination of Algorithm~\ref{par:algo} occurs in Step~\ref{main_algo:convergence} and requires that all of the vertices of the outer approximation of $f$, that is $\{(\x, t) \in U^{(k)}\}$, evaluate $t - q(\x; \alpha^{(k)}, \x_0) > -\varepsilon$. Consequently, if utilizing the scaled curvature at $\x_0$ creates large deviations between $f$ and the underestimator at points in the domain away from $\x_0$, then the vertices comprising the outer approximation of $f$ need less refinement in the cutting plane algorithm for $t - q(\x; \alpha^{(k)}, \x_0) > -\varepsilon$ to hold, thereby requiring fewer iterations of the cutting plane algorithm and faster convergence.
Additionally, the number of cutting planes required to outer approximate $f$ to a certain degree of precision also contributes to the rate at which the termination criterion is satisfied, as more cutting planes may need to be generated to reduce the difference between the outer approximation and the quadratic to within the required tolerance to satisfactorily prove underestimation. These two considerations, concisely reiterated as (i) the degree to which the function deviates from quadratic and (ii) how efficiently the function can be approximated polyhedrally, determine the number of cutting plane iterations required for convergence. We also note that, as the dimensionality of the domain of the function increases, nonlinear functions require many more cutting planes for precise polyhedral outer approximations. With this qualitative description, we provide illustrative examples in Figures~\ref{fig:pathological_i} and~\ref{fig:pathological_ii} that highlight the impact of~(i) and~(ii) on Algorithm~\ref{par:algo}.  

More specifically, Figure~\ref{fig:pathological_i} plots the 2D functions $f(\x) = x_1^2 + x_2^2$, $f(\x) = x_1^4 + x_2^4$ and $f(\x) = x_1^6 + x_2^6$ over the domain $\x \in [-1,1]^2$, for which using the point of construction $\x_0 = (0.5, 0.5)$ leads Algorithm~\ref{par:algo} to perform 1387, 45, and~36 iterations, respectively. Therefore, it can be implied that, the greater the dissimilarity from the underestimator form (i.e., quadratic) over the domain, the faster the algorithm's convergence.
Figure~\ref{fig:pathological_ii} displays three different \textit{quadratic} functions, each of which is its own convex quadratic underestimator. We highlight that the functions in~\ref{fig:path1}, \ref{fig:path2}, and~\ref{fig:path3} progressively become ``more linear'' in the sense that the minimum eigenvalue of the quadratic function in one principal direction approaches~$0$.
Applied on these functions with construction point $\x_0 = (0.5, 0.5)$, Algorithm~\ref{par:algo} requires 1387, 804, and~46 iterations, respectively, evidencing the importance of not only the deviation of the function from the final underestimator but also how precisely the function can be outer approximated using few cutting planes. In practice, quadratic functions can be identified early and omitted from the underestimation procedure. Nevertheless, these principles also apply generally to other functions that have topologies similar to quadratics and/or to functions that are challenging for hyperplanes to efficiently outer-approximate.  

 \begin{figure}[htb]
	\centering
	\includegraphics[width=0.6\textwidth]{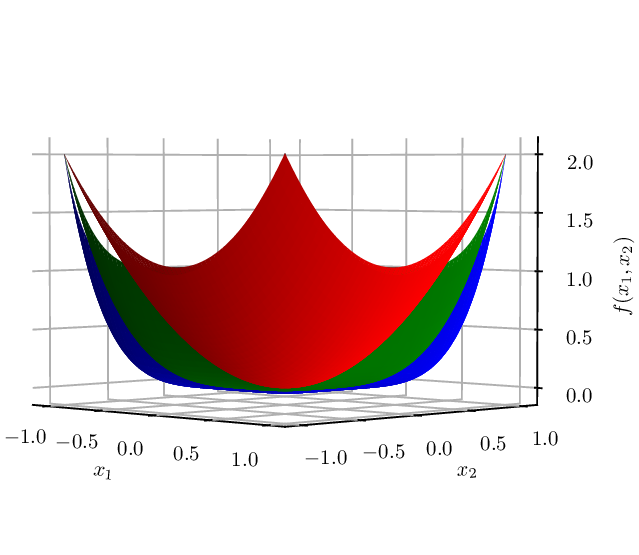}
	\caption{Cross-section of a hierarchy of convex functions that progressively deviate from a perfect quadratic: $f(\x) = x_1^2 + x_2^2$ (red), $f(\x) = x_1^4 + x_2^4$ (green), $f(\x) = x_1^6 + x_2^6$ (blue).}
	\label{fig:pathological_i}
\end{figure}

 \begin{figure}[htb]
	\centering
	\begin{subfigure}{0.32\textwidth}
	\includegraphics[width=\textwidth]{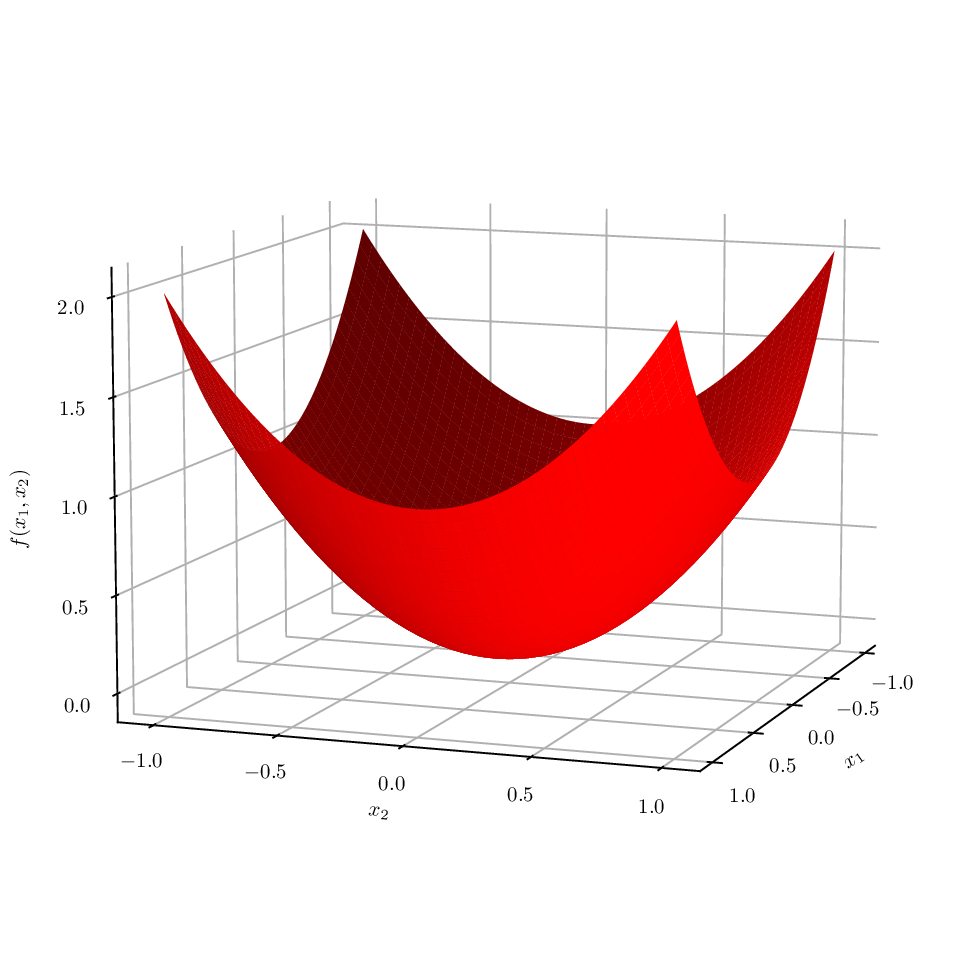}\hfill
	\caption{$f(\x) = x_1^2 + x_2^2$}
	\label{fig:path1}
	\end{subfigure}
	\hfill
	\begin{subfigure}{0.32\textwidth}
	\includegraphics[width=\textwidth]{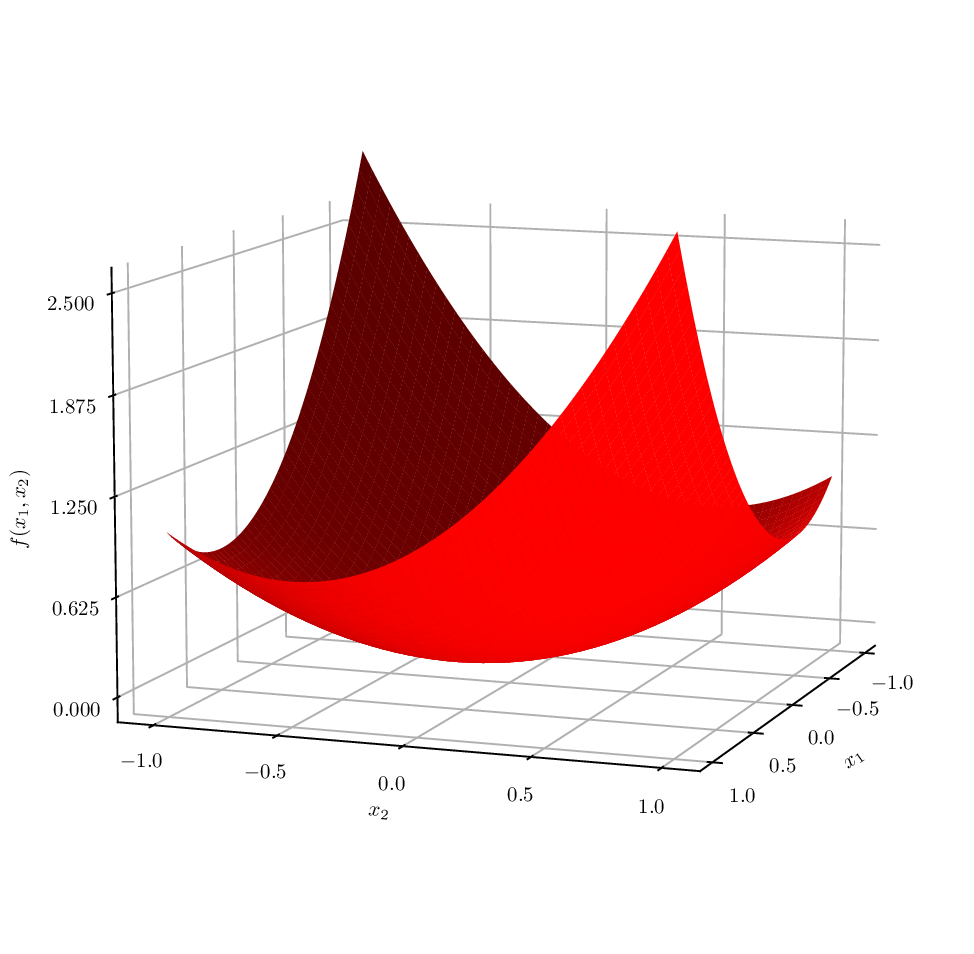}\hfill
	\caption{$f(\x) = x_1^2 + x_2^2 + x_1x_2$}
	\label{fig:path2}
	\end{subfigure}
	\hfill
	\begin{subfigure}{0.32\textwidth}
	\includegraphics[width=\textwidth]{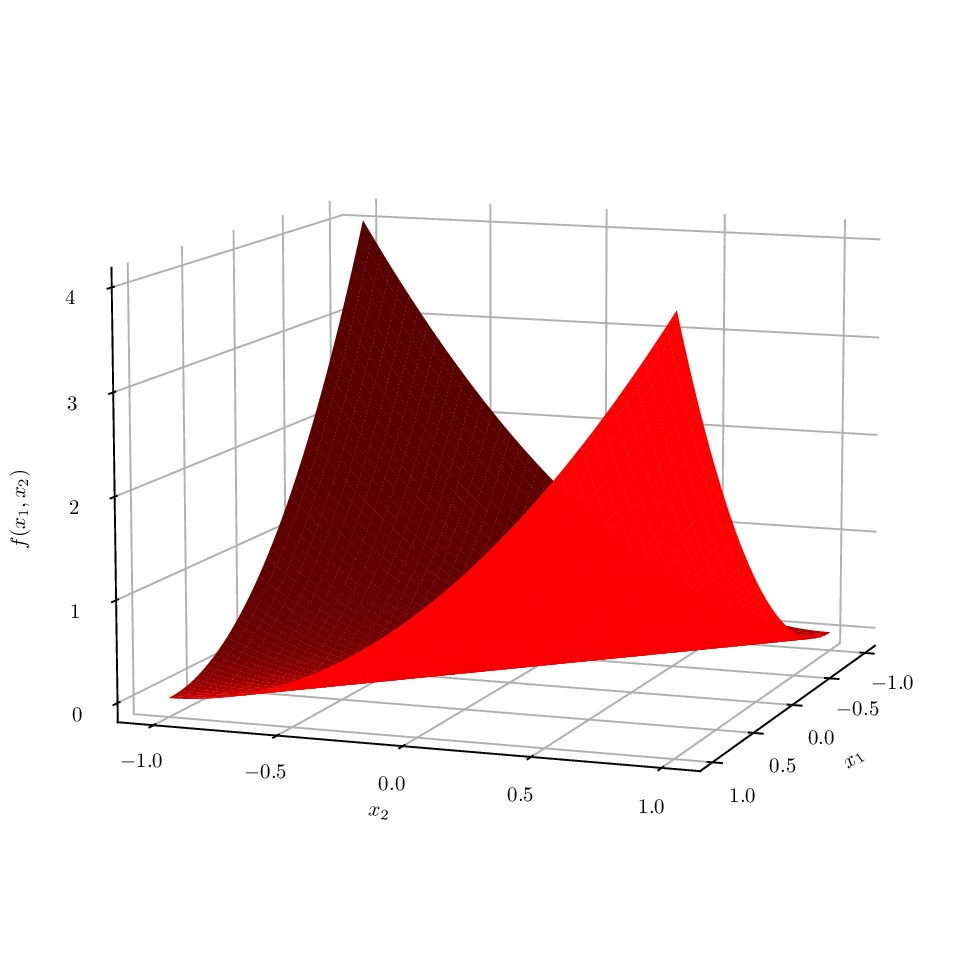}\hfill
	\caption{$f(\x) = x_1^2 + x_2^2 + 2x_1x_2$}
	\label{fig:path3}
	\end{subfigure}
	\caption{Hierarchy of convex quadratic functions that, from left to right, become progressively more linear in one principal direction.}
	\label{fig:pathological_ii}
\end{figure}

\section{Computational Results}\label{sec:computational_results}
\subsection{Computational Experiment Details}
For our computational study, we extract all the nonlinear functions from the 233 MINLPLib (\url{https://www.minlplib.org/}) problems using the filtering criteria \texttt{CONVEX==TRUE} and \texttt{ProblemType == \{MBNLP, MINLP\}} applied to the downloadable problem property data sheet. The filtering criteria ensure that, after relaxing the discrete variables, all nonlinear functions in the problems are convex. We instantiate a Pyomo~\citep{hart2011pyomo,hart2017pyomo} model of the problem instance and execute feasibility-based bounds tightening (fbbt)~\citep{belotti2010feasibility, puranik2017domain} using the fbbt implementation available in Pyomo with default parameter values to create realistic bounds for our quadratic underestimation procedure. We further analyze each separable nonlinear expression and decompose it into sub-expressions, each providing an additional nonlinear expression candidate for underestimation. In total, we identify over 9,000 candidate expressions defined in no larger than 4 dimensions; however, many problems have repeated expressions of similar functional forms, and we therefore reduce the set of expressions into a final set of 19 representative functional forms, defined in 1~to~4 dimensions. Tables~\ref{tab:1DconvexFunctionListMinlplibWithCOCONUT}--\ref{tab:4DconvexFunctionListMinlplibWithCOCONUT} document each expression selected from the MINLPLib, the problem instance it was extracted from, the constraint (or objective) as named in the downloadable \texttt{.py} instance file, the precise location of the expression in the constraint (or objective), and the bounds used for the quadratic underestimation procedure as informed by executing fbbt on the problem instance. We note that, while the functions listed in Tables~\ref{tab:1DconvexFunctionListMinlplibWithCOCONUT}--\ref{tab:4DconvexFunctionListMinlplibWithCOCONUT} are mathematically equivalent to the originals found in the MINLPLib problems, they are presented with some algebraic manipulations (e.g., constants and variables factored out of expressions). Here, we emphasize that the performance of our algorithm does not depend on the symbolic representation of the expression and, thus, these manipulations have no impact on our results.

While the functions resulting from the above process provide representative nonlinear functions for quadratic underestimation with different parameterizations that capture nonlinearity within functional forms, they do not include many known convex functional forms that arise in global optimization problems (e.g., sums of even powered polynomials). To that end, we augment our function library with functions extracted from the COCONUT library (\url{https://arnold-neumaier.at/glopt/coconut/Benchmark/Benchmark.html}), which is a compilation of optimization problems from three different benchmark libraries, namely GlobalLib, CUTE, and constraint satisfaction test problems (CSTP). From those, we specifically identify functional forms that do not appear in the convex MINLPLib test set, and include them in our study. Tables~\ref{tab:1DconvexFunctionListMinlplibWithCOCONUT}--\ref{tab:4DconvexFunctionListMinlplibWithCOCONUT} include these functions with the same descriptions as with the MINLPLib-originated ones, with the exception that the location of the expressions are in reference to the GAMS~\citep{bussieck2004general} file of the instance problem. We remark that many of the problems in the COCONUT library do not have bounds imposed on variables, and thus we assign bounds for variables ensuring that the nonlinearities are captured. All bounds assigned to variables are explicitly noted in Tables~\ref{tab:1DconvexFunctionListMinlplibWithCOCONUT}--\ref{tab:4DconvexFunctionListMinlplibWithCOCONUT}. From the COCONUT library, we extract an additional 12 functions, for a grand total of 31 functions for our quadratic underestimation computational study.

We scale the values of each function to lie within the interval $[-1,1]$ by using the scaling factor $1/\max\{|\min\limits_{\x \in \mathcal{B}}f(\x)|, |\max\limits_{\x \in \mathcal{B}} f(\x)|\}$, and set our convergence tolerance $\varepsilon = 1$e$-3$. Note that, since the tolerance is applied after such function scaling, it effectively acts as a relative tolerance for all functions.

For each function, we generate 5~points of construction using Latin hypercube sampling and construct quadratic underestimators at each point. We evaluate the quality of our underestimators using the tightness metric defined by Equation~(\ref{eq:metric}), which computes the fractional reduction of the hypervolume between a linear underestimator $\ell$ and the function $f$ that results as a consequence of using a quadratic (as opposed to the linear) underestimator $q$, constructed at the same point. In order to compute this metric, we discretize the space using 100$d$ Latin hypercube-sampled points, where $d$ is the dimensionality of the function, which we find to be sufficient for our study. 
\begin{equation}
	\label{eq:metric}
	M^{q(\x)}_{\ell(\x)} := \frac{\int_{\x \in \textrm{dom}(f)} \big(q(\x)-\ell(\x)\big)d\x}{\int_{\x \in \textrm{dom}(f)} \big(f(\x)-\ell(\x)\big)d\x}
\end{equation}

The computational study is executed on a Lenovo ThinkPad T490 laptop equipped with a 1.80GHz~Intel(R)~Core(TM)~i7-8565U~CPU on a Ubuntu~22.04 virtual machine with 8GB~RAM and 4 logical processors. We implement the algorithm in Python, acknowledging that efficiency gains could be achieved by utilizing a compiled language.  

\subsection{Results}
We report in Table~\ref{tab:convexMINLPLibWithCOCONUT} the average and range of the tightness metric, CPU time, and number of vertices enumerated for the nonlinear expressions, categorized by dimension. For 1D and 2D functions, we observe an average metric of 0.533 and 0.575, respectively, while for 3D and 4D functions we observe an average metric of 0.399 and 0.354, respectively. The diminishing quality of the underestimators in higher dimensions can be explained in part by the rigid modification of the second-order term, where each element of $\nabla^2 f(\x_0)$ is identically scaled, thus requiring the choice of $\alpha$ to be sufficiently low to underestimate in all principal directions, and thereby limited by the direction of least curvature. Notwithstanding, we observe on average 35\% tighter underestimators for 4-dimensional functions compared to linear underestimators. Table~\ref{tab:convexMINLPLibWithCOCONUT} also evidences the efficient construction of the quadratic underestimators. The majority of the underestimators are very efficient to construct, requiring less than 33~CPU~milliseconds~(ms) for functions in 3D and below. The longest time to construct an underestimator in the 4D case required 182~ms but, on average, functions in 4 dimensions required 51~ms for construction. The number of vertices generated by the algorithm shows an exponential increase in relationship to the dimension and manifests the main tractability limitation.
Finally, to visually appreciate the tightness of the produced underestimators, we provide some depictions of underestimators for 1D and 2D functions in Figures~\ref{fig:1DQuadUnderestimators} and~\ref{fig:2DQuadUnderestimators}, respectively.

 \begin{table}[htb]
\caption{Quadratic underestimator statistics, by function dimension.}
\label{tab:convexMINLPLibWithCOCONUT}
 \adjustbox{width=\linewidth}{
 \begin{tabular}{ c c c c c c c c c } 
  \toprule
  \multirow{2}{*}{Dimension} & \multirow{2}{*}{\# Functions} & \multirow{2}{*}{\# Underestimators}& \multirow{2}{*}{Avg. Metric} & \multirow{2}{*}{Range Metric}& \multirow{2}{*}{Avg. CPU (ms)} & \multirow{2}{*}{Range CPU (ms)} & \multirow{2}{*}{Avg. \# Vertices} & \multirow{2}{*}{Range \# Vertices}\\ [0.5ex]
  & & & & \\
  \midrule
  1 & 14 & 70 & 0.533 & 0.000--0.899 &  4 & 1--11  & 16.3 & 4--28  \\
  2 &  8 & 40 & 0.575 & 0.169--0.976 &  9 & 4--25  & 50.5 & 24--148 \\
  3 &  6 & 30 & 0.399 & 0.086--0.726 & 11 & 1--33  & 95.9 & 16--316 \\
  4 &  3 & 15 & 0.354 & 0.063--0.776 & 51 & 2--182 & 533.1 & 32--1951 \\ 
  \bottomrule
 \end{tabular}}
 \end{table}

 \begin{figure}[htb]
	\centering
	\includegraphics[width=.32\textwidth]{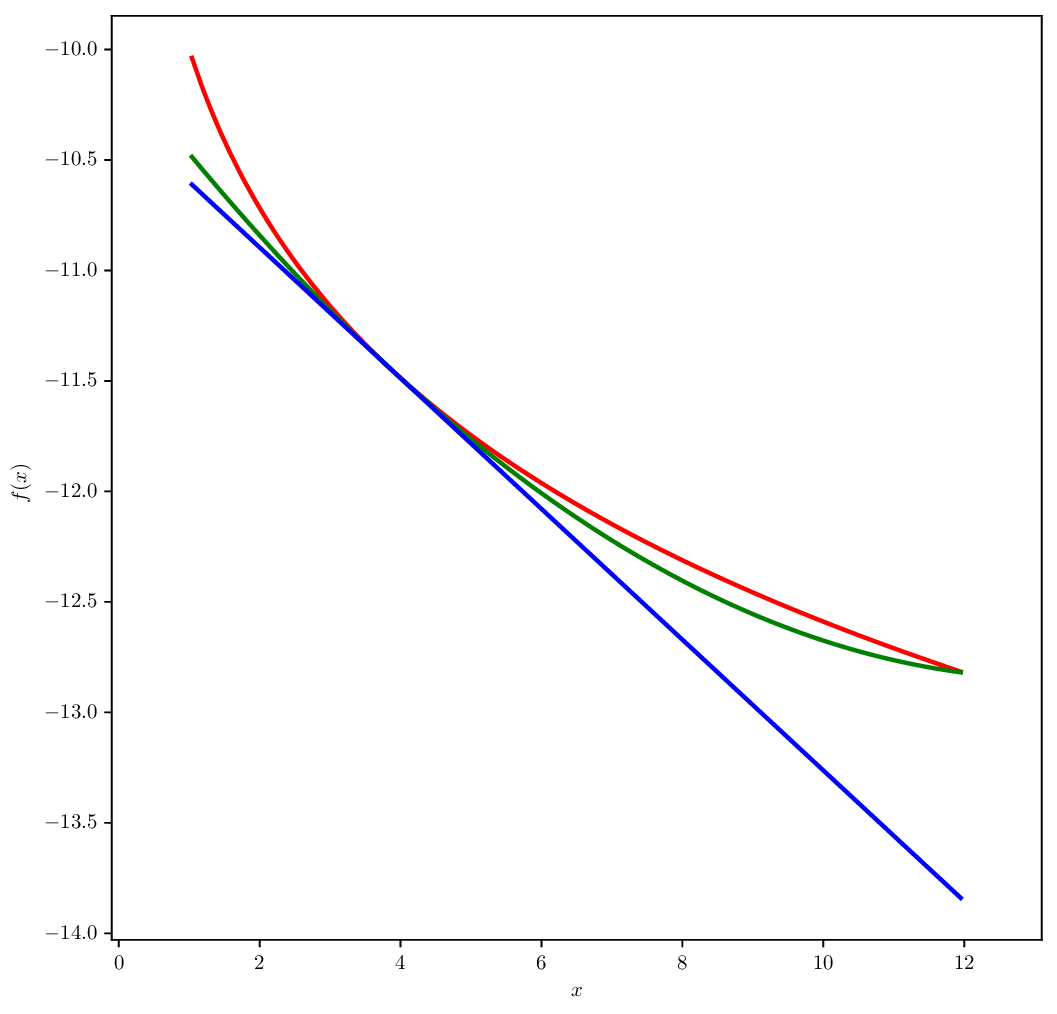}\hfill
	\includegraphics[width=.32\textwidth]{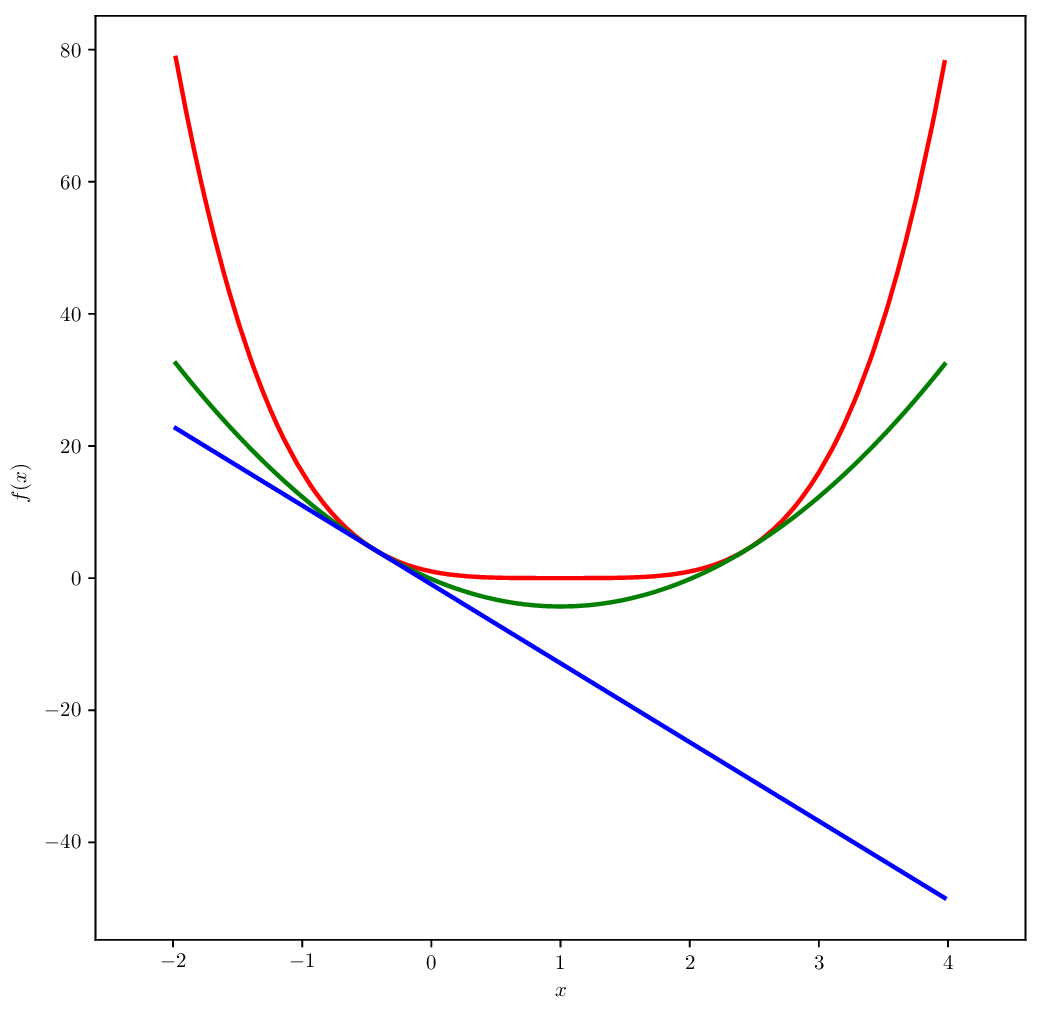}\hfill
	\includegraphics[width=.32\textwidth]{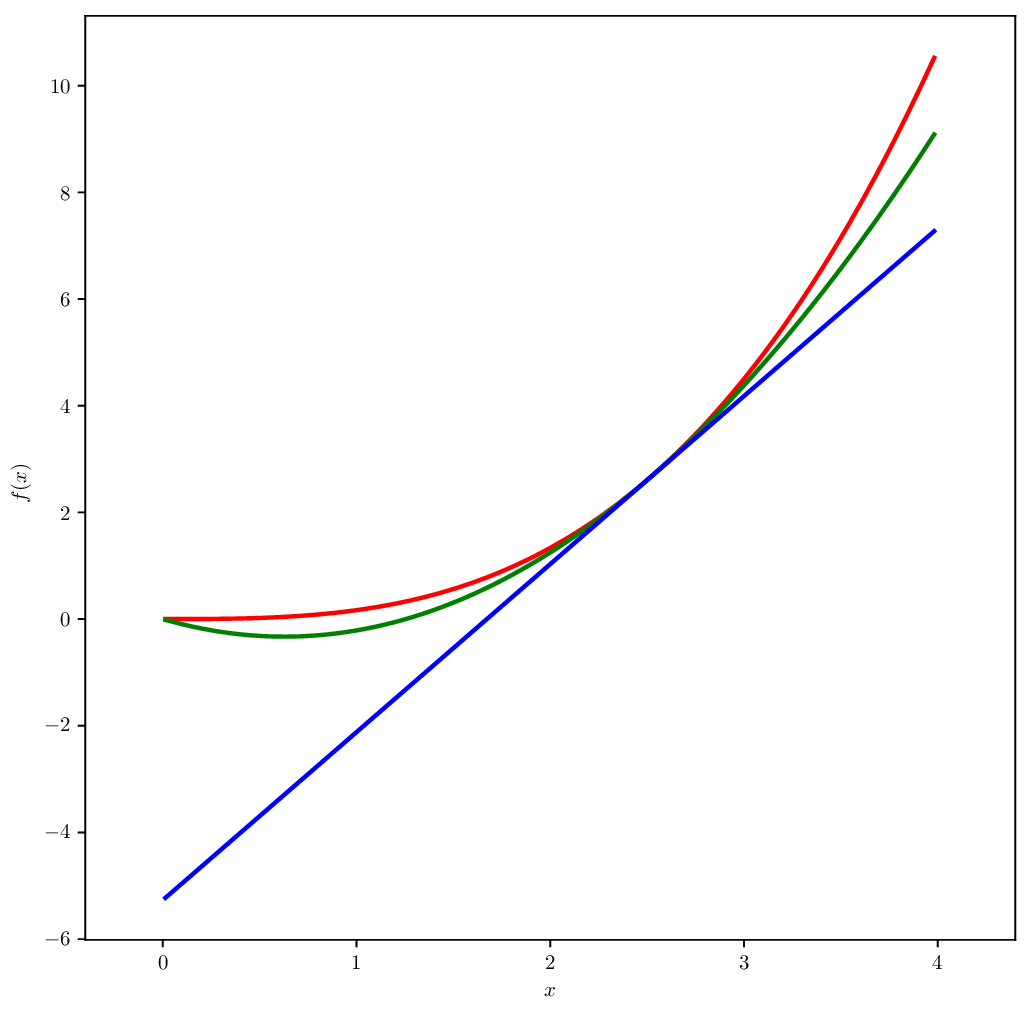}
	\caption{1D quadratic underestimators (green) and linear underestimators (blue) for functions (red) from chakra (left), hs049 (center), and harker (right) benchmark problems.}
	\label{fig:1DQuadUnderestimators}
\end{figure}

\begin{figure}[htb]
	\centering
	\includegraphics[width=.32\textwidth]{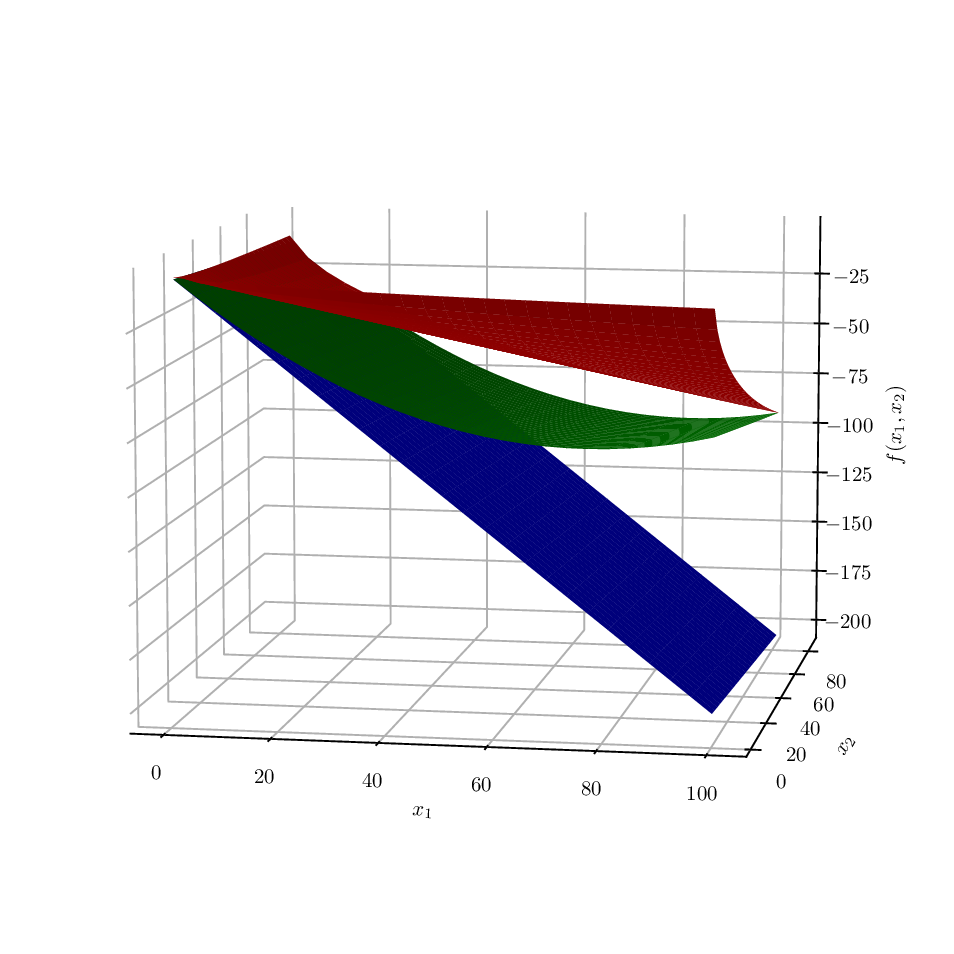}\hfill
	\includegraphics[width=.32\textwidth]{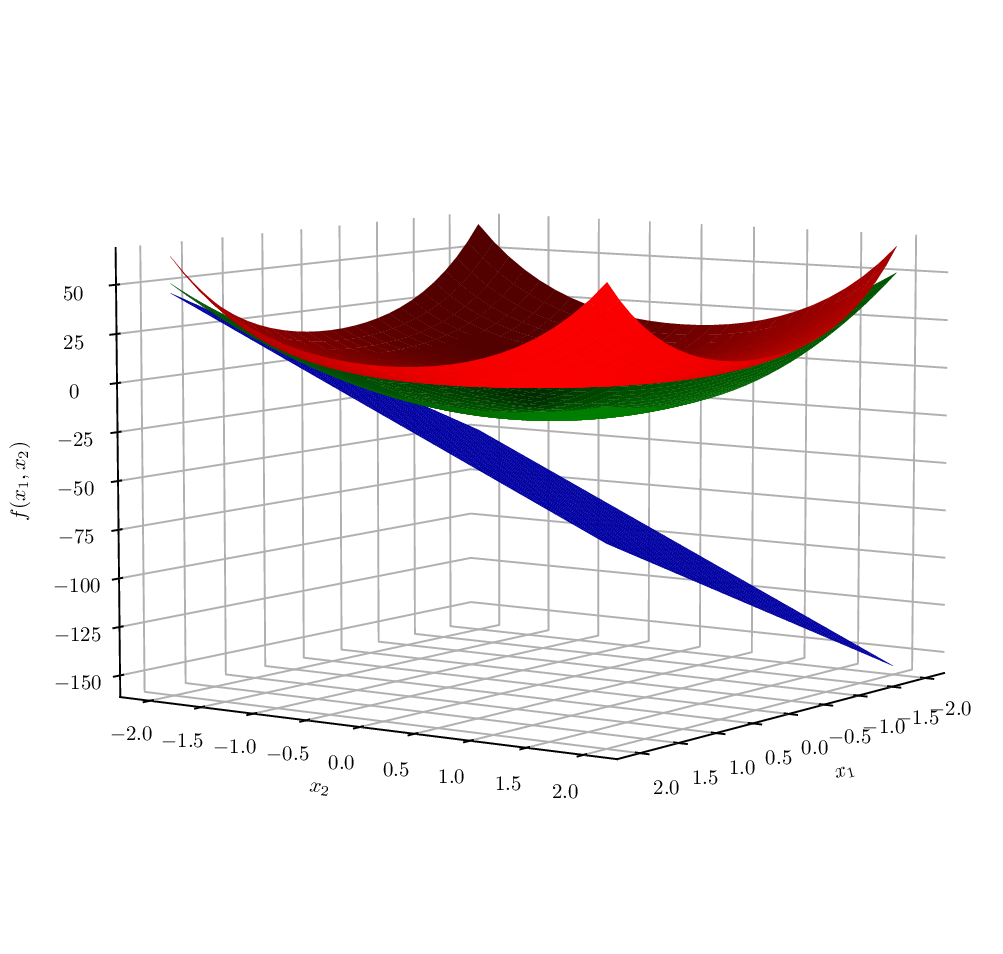}\hfill
	\includegraphics[width=.32\textwidth]{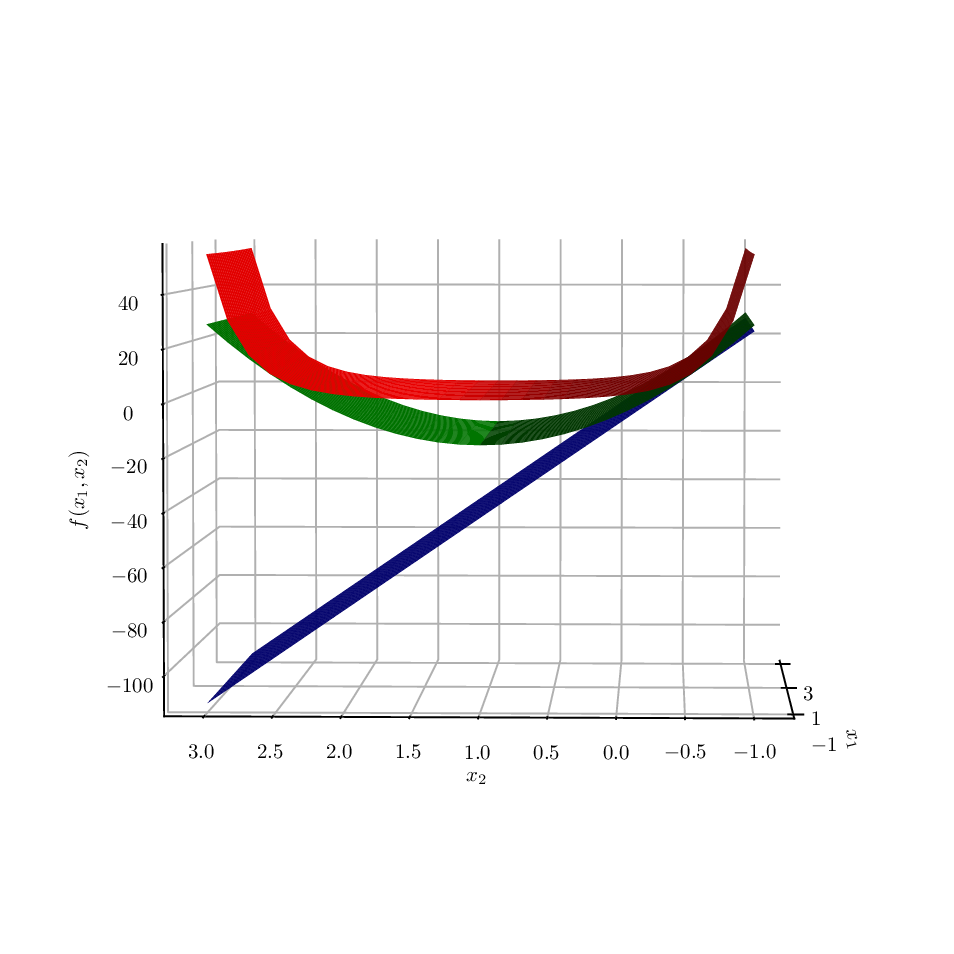}
	\caption{2D quadratic underestimators (green) and linear underestimators (blue) for functions (red) from tls12 (left), arwhead (center), and polak1 (right) benchmark problems.}
	\label{fig:2DQuadUnderestimators}
\end{figure}

\section{Enhanced Underestimators Within Known Polyhedral Regions} \label{sec:externalLinearConstraints}
In the context of DGO algorithms that utilize underestimation, the optimization problem often includes a subset of linear constraints, which define a polyhedral region. We can exploit knowledge of such a region to construct even tighter quadratic underestimators, where we allow them to overestimate functions in points that otherwise violate these linear constraints.

More specifically, given a set of such ``external'' linear constraints $L_j(\x) \le 0$, where $j$ indexes over the latter, the following enumerated list captures the relevant requirements and modifications to the cutting-plane algorithm to accommodate this setting:
\begin{enumerate}
	\item We require that the set of variables of the function that is being underestimated includes all variables referenced in each of the external linear constraints; that is, $\text{dom}(L_j) \subseteq \text{dom}(f)$ for all $j$.
	\item We require that the point of construction selected for the underestimator is within the polyhedral feasible region defined by the linear constraints, which can be enforced through a simple filtering step in preprocessing. This requirement ensures that $\alpha=1$ is a valid upper bound for the initial value of the scaling parameter in the cutting-plane algorithm.
	\item \label{list:mod_list3} We modify Step~\ref{main_algo:init} of the cutting-plane algorithm by executing the vertex enumeration procedure and intersecting the original box domain defined by bounds on the variables with each external linear constraint \textit{before} lifting the domain in $t$. We note that redundant constraints are immediately discarded if $\max\limits_{\x \in \mathcal{V}} L_j(\x) \le 0$, and that Remark~\ref{rem:cut6} applies in this case as well. After attaining a new vertex set resulting from intersecting the box domain with the linear constraints, we then compute bounds on $t$. Whereas $t^U$ remains a maximization of a convex function over a convex set, where evaluation of the vertices suffices, in the case of $t^L$ we include the linear constraints in the convex minimization problem, as shown in~(\ref{eq:min_linconstr}). Here, we remark that including the linear constraints in~(\ref{eq:min_linconstr}) is not necessary for the algorithm to converge, but can create tighter initial bounds on $t$ that may help the algorithm terminate faster.

	\begin{equation}
		\label{eq:min_linconstr}
		\begin{array}{rcl}
			t^L := & \min\limits_{\x \in \mathcal{B}} & f(\x) \\ 
			& \text{s.t.}             & L_j(\x) \le 0 \quad \forall j
		\end{array}
	\end{equation}
\end{enumerate}

Note how, after the above modification, the intersection of the linear constraints with the original box domain generates a set of vertices that are restricted to the feasible domain. Hence, in the course of the cutting-plane algorithm's remaining execution, as the separating hyperplanes are iteratively intersected with this set of vertices that is already restricted to the feasible domain, the cutting-plane algorithm will never check if the quadratic underestimator overestimates the function (Step~\ref{main_algo:evaluate_underestimation}) at a point that is outside of the feasible domain as informed by the external linear constraints. The overall impact is the generation of tighter underestimators, with the accompanying cost of slower convergence as the underestimators will lie closer to the function and the cutting-plane algorithm will require more refinement to elevate the lower bound to the termination tolerance.

We highlight that the added utility of including external linear constraint information in the construction of quadratic underestimators provides a unique advantage to our method compared to those proposed in the literature. We also note that many global optimization algorithms generate cuts to facilitate convergence. In our extension, linear constraints that are not explicit in the problem formulation but are derived through cut generation procedures, whether valid by feasibility or optimality, can be leveraged in this extension to construct tighter quadratic underestimators.

\subsection{Illustrative example}
We provide a simple example to clearly communicate the impact of integrating external linear constraint information into the cutting-plane algorithm to produce tighter underestimators. We consider the function $f(\x) := e^{\frac{1}{2}x_1^2 + x_2^2 + \frac{1}{4}x_1 + \frac{1}{4}x_2 + 1}$ over box $\x \in [0,1]^2$, the point of construction $\x_0 = (1,1)$, and two constraints: $x_1 + x_2 \ge 1$ and $x_1 - x_2 \le 0$.
We qualitatively show the change in the underestimator in Figure~\ref{fig:linconstr}. Quantitatively, the value of $\alpha$ computed without information from external linear constraints is $\alpha=0.3456$, while this value tightens to $\alpha=0.4351$ when $x_1 + x_2 \ge 1$ is added to the problem, and further to $\alpha=0.5261$ when both $x_1 + x_2 \ge 1$ and $x_1 - x_2 \le 0$ are added to the problem. We highlight the clear visual overestimation of the function in the infeasible region presented in Figures~\ref{fig:linconstr} and~\ref{fig:linconstr_topdown}. This simple example demonstrates the capability of Algorithm~\ref{par:algo} to take into account external linear constraint information to generate tighter underestimators in the context of nonlinear optimization problems.

\begin{figure}[!htb]
    \centering
    \begin{subfigure}{0.3\textwidth}
        \centering
        \includegraphics[width=\textwidth]{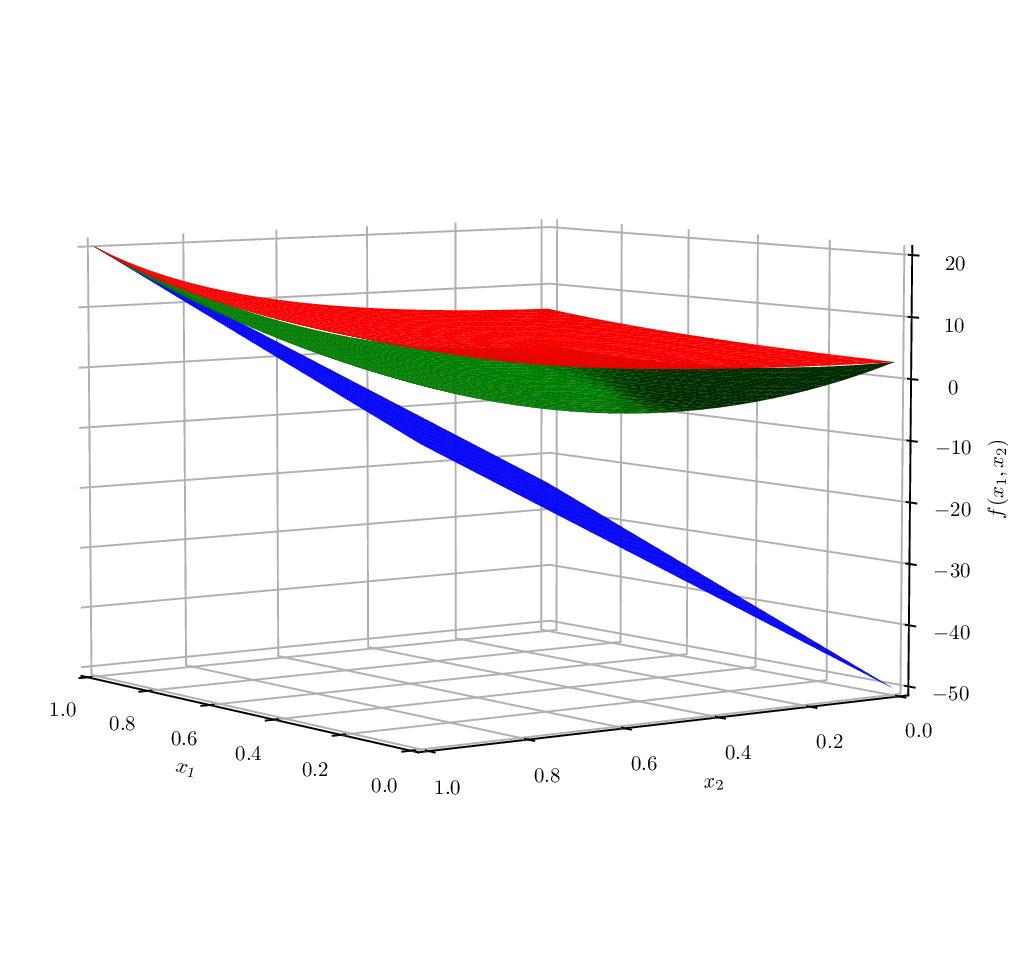}
    \end{subfigure}
    \hfill
    \begin{subfigure}{0.3\textwidth}
        \centering
        \includegraphics[width=\textwidth]{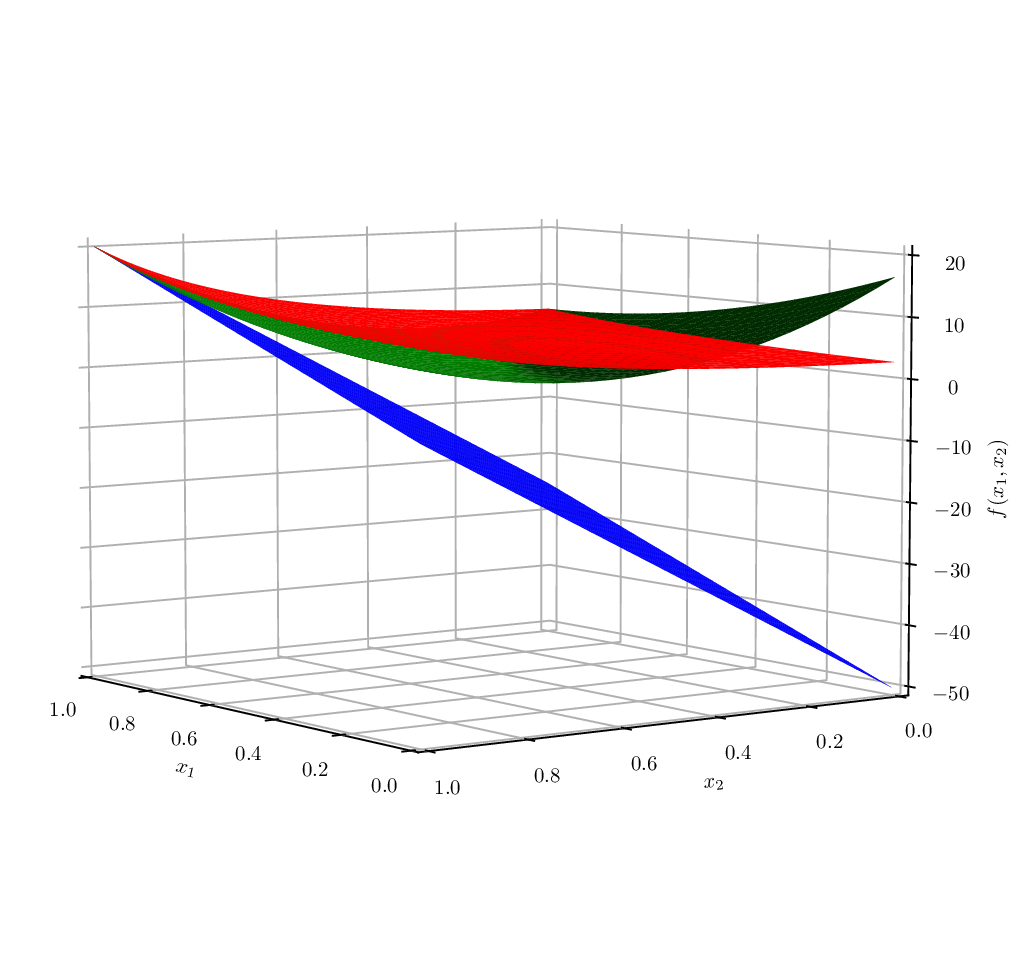}
    \end{subfigure}
    \hfill
    \begin{subfigure}{0.3\textwidth}
        \centering
        \includegraphics[width=\textwidth]{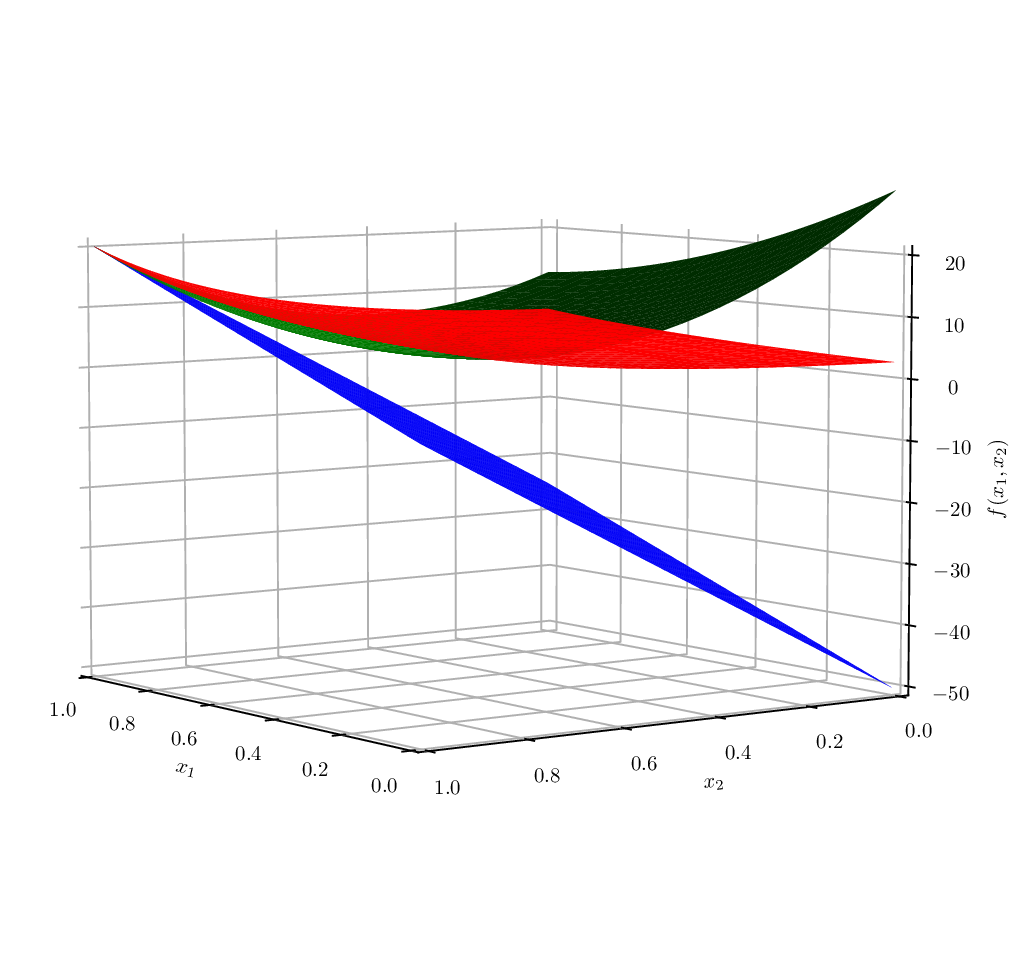}
    \end{subfigure}
    \caption{Quadratic underestimators (green) and linear underestimators (blue) for $e^{\frac{1}{2}x_1^2 + x_2^2 + \frac{1}{4}x_1 + \frac{1}{4}x_2 + 1}$ (red), constructed at point $(1,1)$, in the presence of the following external linear constraints: (i) none (left), (ii) $x_1 + x_2 \ge 1$ (center), (iii) $x_1 + x_2 \ge 1$ and $x_1 - x_2 \le 0$ (right).}
	\label{fig:linconstr}

    \vspace{1em}  

    \begin{subfigure}{0.3\textwidth}
        \centering
        \includegraphics[width=\textwidth]{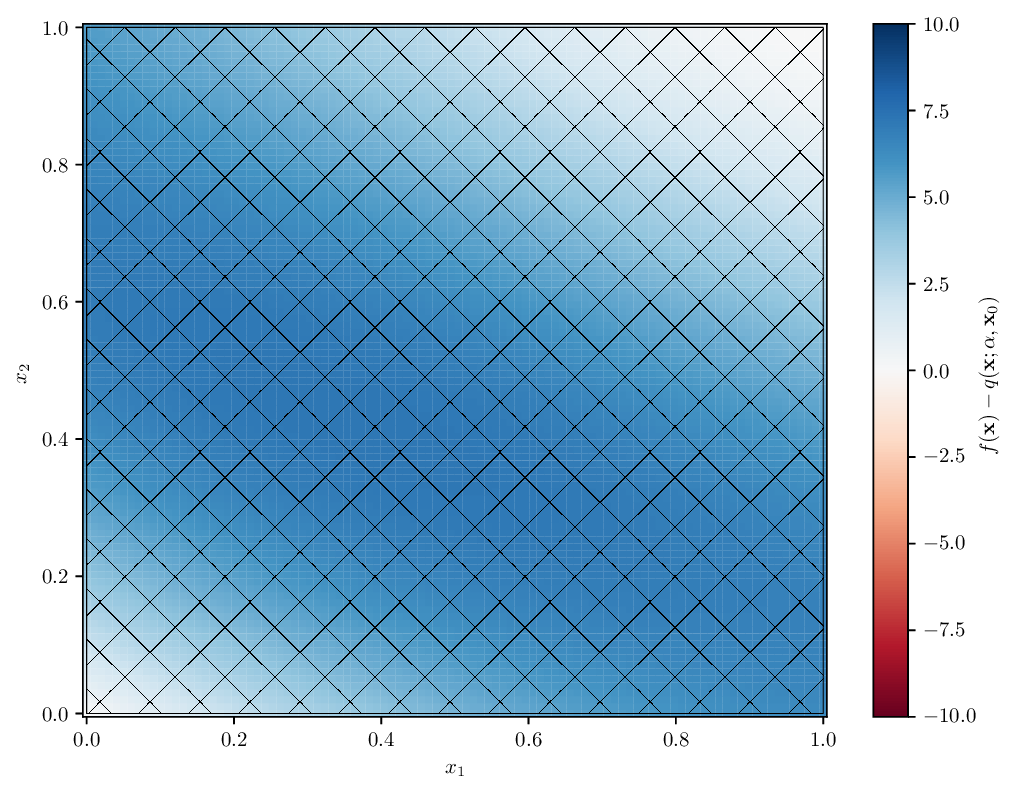}
    \end{subfigure}
    \hfill
    \begin{subfigure}{0.3\textwidth}
        \centering
        \includegraphics[width=\textwidth]{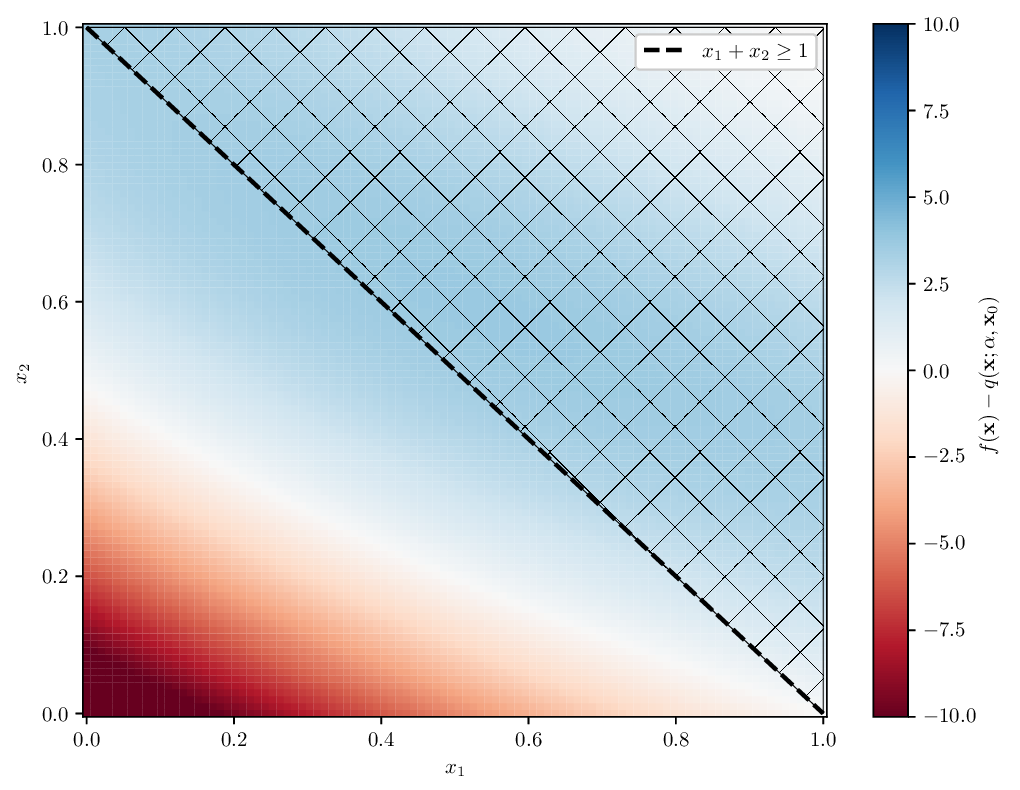}
    \end{subfigure}
    \hfill
    \begin{subfigure}{0.3\textwidth}
        \centering
        \includegraphics[width=\textwidth]{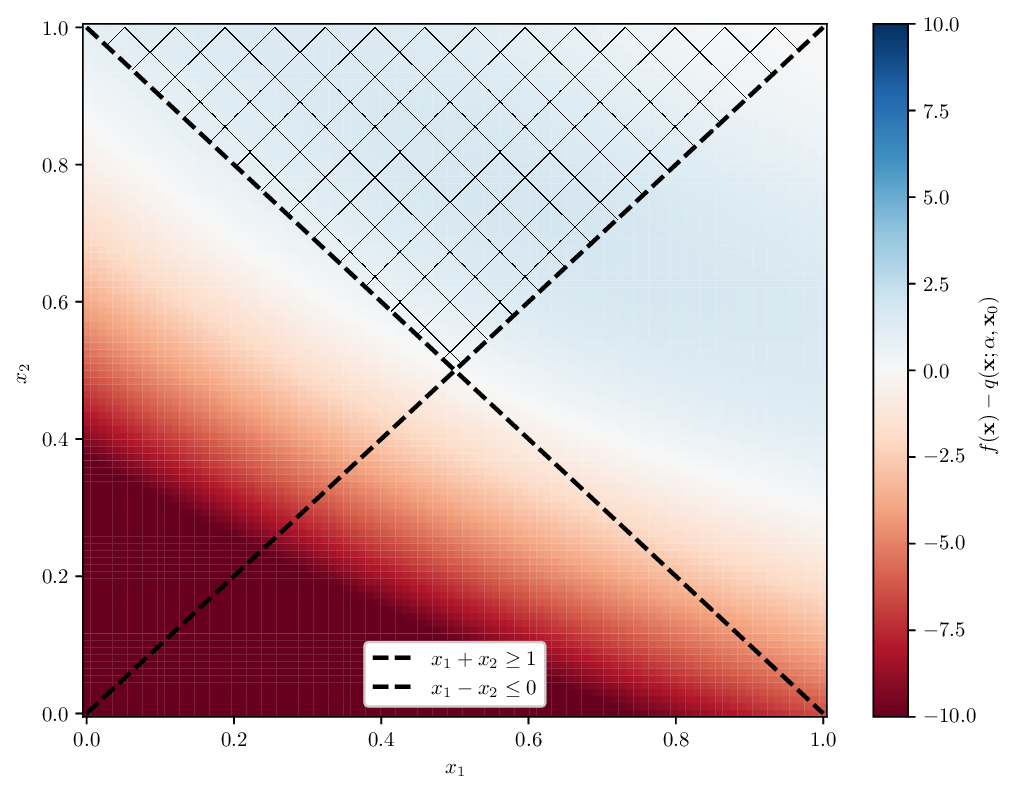}
    \end{subfigure}
    \caption{Difference between the function and the underestimator where coincidence (white), underestimation (blue), overestimation (red), and the feasible region (hatched area) of the problem are displayed. With each added constraint (left : none, center : $x_1 + x_2 \ge 1$, right : $x_1 + x_2 \ge 1$ and $x_1 - x_2 \le 0$), the underestimator becomes tighter as it is allowed to overestimate in the infeasible region.}

	\label{fig:linconstr_topdown}
\end{figure}

\section{Conclusions}\label{sec:conclusions}
	In this work, we presented a cutting-plane algorithm to compute the scaling parameter, $\alpha$, in the second-order Taylor series approximation quadratic underestimator parameter $\alpha$ in the quadratic underestimator proposed by~\citet{su2018improved} for twice-differentiable convex functions.
	We established the convergence of the algorithm and demonstrated the quality of the quadratic underestimators it produces as well as its overall computational efficiency. More specifically, through a computational study involving representative nonlinear convex functions extracted from literature benchmark problems, we showed that quadratic underestimators reduce, on average, the hypervolume between the underestimated function and corresponding linear underestimators constructed at same points of the domain by 53.3\%, 57.5\%, 39.9\%, and 35.4\%, respectively for dimensions 1~through~4. Construction of quadratic underestimators to a convergence tolerance of $\varepsilon=1$e$-3$ was shown to require only a modest amount of computational effort in the order of hundredths or thousandths of a second via a rudimentary implementation. In addition, we proposed an extension of our algorithm to take into account external linear constraint information, which would be available in the context of optimization problems, to further tighten the quadratic underestimators by allowing them to overestimate in the infeasible region, and we demonstrated the impact of this extension with a small example.

	Given the proposed quadratic underestimation procedure to tightly outer approximate twice-differentiable convex functions, there exist opportunities to extend this capability towards outer approximating \textit{non-convex} functions, as well. In particular, we highlight the general class of functions representable as a difference of convex (d.c.) functions for which our cutting-plane algorithm can be readily adapted. We investigate this methodological avenue in the second part of this two-paper series~\citep{strahl2024dc}.

\begin{table}
\caption{1D functions in computational study.}
\label{tab:1DconvexFunctionListMinlplibWithCOCONUT}
 \adjustbox{width=\linewidth}{
 \begin{tabular}{ l l l l l} 
  \toprule
  \multirow{2}{*}{Problem}& \multirow{2}{*}{Objective/Constraint} & \multirow{2}{*}{Expression}& \multirow{2}{*}{Bounds on variables} & \multirow{2}{*}{Parameter Values}\\ [0.5ex]
  & & & & \\
  \midrule
  	procurement2mot$^a$ & e8, 1st term & $f(x_1) = -x_1^{a}$ & \makecell[tl]{$x_1^L$ : 0.10000000000000002 \\ $x_1^U$ : 64.99999999999999 } & a : 0.333333333333333 \\ \hline
	procurement2mot$^a$ & e1, 1st term & $f(x_1) = -x_1^{a}$ & \makecell[tl]{$x_1^L$ : 0.10000000000000002\\ $x_1^U$ : 34.99999999999999 } & a : 0.5 \\ \hline
    chakra$^b$ & objcon, 1st term & $f(x_1) = -ax_1^{b}$ & \makecell[tl]{$x_1^L$ : 1\\ $x_1^{U*}$ : 12 } & \makecell[tl]{a : 0.774245779798168 \\ b : 0.75} \\ \hline
    gtm$^b$ & objcon, 1st term & $f(x_1) = ax_1^{b}$ & \makecell[tl]{$x_1^L$ : 0.2\\ $x_1^{U*}$ : 15 } & \makecell[tl]{a : 8.89583741831423 \\ b : 0.666666666666667} \\ \hline 
    harker$^b$ & objcon, 1st cubic term & $f(x_1) = ax_1^b$ & \makecell[tl]{$x_1^L$ : 0\\ $x_1^{U*}$ : 4 } & \makecell[tl]{a : 0.166666666666667 \\ b : 3}\\ \hline
    hs049$^c$ & objcon, 3rd term & $f(x_1) = -a + x_1^b$ & \makecell[tl]{$x_1^{L*}$ : -2\\ $x_1^{U*}$ : 4} & \makecell[tl]{a : 1 \\ b : 4} \\ \hline 
    schwefel$^d$ & con1, 1st term & $f(x_1) = x_1^{a}$ & \makecell[tl]{$x_1^L$ : -0.5\\ $x_1^U$ : 0.4 } & a : 10 \\ \hline
    srcpm$^b$ & objcon, 1st term & $f(x_1) = ax_1^{-b}$ & \makecell[tl]{$x_1^L$ : 2\\ $x_1^{U*}$ : 25 } & \makecell[tl]{a : 1808.40439881057 \\ b : 2.33333333333333} \\ \hline
	fo7\_ar4\_1$^a$ & e265, 1st term & $f(x_1) = ax_1^{-b}$ & \makecell[tl]{$x_1^L$ : 1.5\\ $x_1^U$ : 6 } & \makecell[tl]{a : 9 \\ b : 1} \\ \hline 
	batchs101006m$^a$ & obj, 11th term & $f(x_1) = a\text{exp}(bx_1)$ & \makecell[tl]{$x_1^L$ : 6.514978663740184\\ $x_1^U$ : 9.615805480084319 } & \makecell[tl]{a : 150 \\ b : 0.5} \\ \hline  
	gams01$^a$ & obj, 4th term & $f(x_1) = a\text{exp}(-bx_1)$ & \makecell[tl]{$x_1^L$ : 0\\ $x_1^U$ : 13.02908889125893 } & \makecell[tl]{a : 985 \\ b : 0.306392275145139} \\ \hline 
    polak3$^c$ & con2, 1st term & $f(x_1) = \text{exp}((-a + x_1)^2)$ &  \makecell[tl]{$x_1^{L*}$ : -1.25\\ $x_1^{U*}$ : 3 }  & a : 0.9092974268256817 \\ \hline
	cvxnonsep\_nsig20r$^a$ & e2, 1st term & $f(x_1) = -a\text{log}(x_1)$ & \makecell[tl]{$x_1^L$ : 1\\ $x_1^U$ : 10 }  & a : 0.065 \\ \hline 
    odfits$^c$ & objcon, 1st term & $f(x_1) = ax_1\text{log}(bx_1)$ & \makecell[tl]{$x_1^L$ : 0.1\\ $x_1^{U*}$ : 10 } & \makecell[tl]{a : 0.5 \\ b : 0.011111111111111112} \\ 
    \bottomrule
    \multicolumn{5}{l}{\small $^a$MINLPLib, $^b$COCONUT/GlobalLib, $^c$COCONUT/CUTE, $^d$COCONUT/CSTP, $^*$bounds assigned} \\
 \end{tabular}}
 \end{table}

\begin{table}
\caption{2D functions in computational study.}
\label{tab:2DconvexFunctionListMinlplibWithCOCONUT}
 \adjustbox{width=\linewidth}{
 \begin{tabular}{ l l l l l} 
  \toprule
  \multirow{2}{*}{Problem}& \multirow{2}{*}{Objective/Constraint} & \multirow{2}{*}{Expression}& \multirow{2}{*}{Bounds on variables} & \multirow{2}{*}{Parameter Values}\\ [0.5ex]
  & & & & \\
  \midrule
	gams01$^a$ & e103, 1st term & $f(x_1, x_2) =  \sqrt{b(-cx_1 - cx_2 + 1)^2 + (dx_1 + dx_2 - 1)^2}$ & \twodbounds{10.500000000000032}{18.45973612666075}{0}{7.959736126660783} & \makecell[tl]{a : 1255.17137031437 \\ b : 0.463731258833818 \\ c : 0.0308163056574352 \\ d : 0.0421228975436196} \\ \hline
	tls12$^a$ & e373, 1st term & $f(x_1, x_2) =  -\sqrt{x_1x_2}$ & \twodbounds{1}{100}{1}{86.0} &  \\ \hline
	cvxnonsep\_pcon20r$^a$ & e2, 1st term & $f(x_1, x_2) =  a^{(x_1 + x_2)}$ & \twodbounds{0}{5}{0}{5}  & a : 2 \\ \hline 
    arwhead$^d$ & obj, 1st nonlinear term & $f(x_1, x_2) = (x_1^2 + x_2^2)^2$ & \twodboundsundef{-2}{2}{-2}{2} & \\ \hline
	batch0812$^a$ & obj, 8th term & $f(x_1, x_2) =  a\text{exp}(x_1 + bx_2)$ & \twodbounds{0.15678251173242852}{ 1.6094379124341}{7.401311768742739}{8.00636756765025}  & \makecell[tl]{a : 1200 \\ b : 0.6}\\ \hline
	batch0812$^a$ & e193, 1st term & $f(x_1, x_2) =  a\text{exp}(-x_1 + x_2)$ & \twodbounds{5.81464756191388}{ 5.9395048081772694}{0.506817602368452}{0.6316748486318415} & a : 485000\\ \hline
    polak1$^c$ & con1, 1st term & $f(x_1, x_2) = \text{exp}(ax_1^2 + (-b + x_2)^2)$ & \twodboundsundef{-1}{3}{-1}{3} & \makecell[tl]{a : 0.001 \\ b : 1}  \\ \hline
	synthes2$^a$ & e1, 1st term & $f(x_1, x_2) =  -\text{log}(x_1 + x_2 + 1)$ & \twodbounds{0}{5.5}{0}{2.75} &  \\
    \bottomrule
    \multicolumn{5}{l}{\small $^a$MINLPLib, $^b$COCONUT/GlobalLib, $^c$COCONUT/CUTE, $^d$COCONUT/CSTP, $^*$bounds assigned} \\
 \end{tabular}}
 \end{table}

\begin{table}
\caption{3D functions in computational study.}
\label{tab:3DconvexFunctionListMinlplibWithCOCONUT}
 \adjustbox{width=\linewidth}{
 \begin{tabular}{ l l l l l} 
  \toprule
  \multirow{2}{*}{Problem}& \multirow{2}{*}{Objective/Constraint} & \multirow{2}{*}{Expression}& \multirow{2}{*}{Bounds on variables} & \multirow{2}{*}{Parameter Values}\\ [0.5ex]
  & & & & \\
  \midrule
	p\_ball\_10b\_5p\_2d\_h$^a$ & e41, 1st term & $f(x_1, x_2, x_3) = (ax_2 + b)\left((\frac{-x_1}{ax_2 + b} + c)^2 + d(\frac{-ex_3}{ax_2 + b} + 1)^2 - 1\right)$ & \threedbounds{0}{6.02957539217313}{0}{1}{0}{10} & \makecell[tl]{a : 0.9999 \\ b : 0.0001 \\ c : 0.648386267690458 \\ d : 28.5367916413211 \\ e : 0.187196372132791}\\ \hline
	clay0203hfsg$^a$ & e106, 1st term & $f(x_1, x_2, x_3) = (ax_2 + b)\left(\frac{cx_1^2}{(x_2 + d)^2} - \frac{ex_1}{ax_2 + b} + \frac{fx_3^2}{(x_2 + d)^2} - \frac{gx_3}{ax_2 + b}\right)$ & \threedbounds{0}{18.5}{0}{1}{0}{13.0} & \makecell[tl]{a : 0.999 \\ b : 0.001 \\ c : 1.00200300400501 \\ d : 0.001001001001001 \\ e : 35 \\ f : 1.00200300400501 \\ g : 14}\\ \hline
	rsyn0805hfsg$^a$ & e376, 1st term & $f(x_1, x_2, x_3) = (ax_2 + b)\left(\frac{x_1}{ax_2 + b} - \text{log}(\frac{x_3}{ax_2 + b} + 1)\right)$ & \threedbounds{0}{2.39789527279837}{0}{1}{0}{10.0}  & \makecell[tl]{a : 0.999 \\ b : 0.001}\\ \hline
	gams01$^a$ & e24, 1st term & $f(x_1, x_2, x_3) = a\sqrt{b(cx_1 + cx_2 - dx_3 - 1)^2 + (ex_1 + ex_2 + fx_3 - 1)^2}$ & \threedbounds{2.500000000000116}{16.42953724218257}{0}{13.929537242182592}{0}{21.260845713230545} & \makecell[tl]{a : 280.391667345843 \\ b : 0.0726707025480232 \\ c : 0.0237333496386274 \\ d : 0.437956808701174 \\ e : 0.0240319698226927 \\ f : 0.175920533216081} \\ \hline
	batchs101006m$^a$ & obj, 1st term & $f(x_1, x_2, x_3) = a\text{exp}(bx_1 + x_2 + x_3)$ & \threedbounds{5.7037824746562}{8.1605182474775}{0}{1.79175946922805}{0}{1.79175946922805} & \makecell[tl]{a : 250 \\ b : 0.6} \\ \hline 
	risk2bpb$^a$ & obj, 1st term & $f(x_1, x_2, x_3) = -a(x_1^{b} + ax_2^{b} + ax_3^{b})$ & \threedbounds{0.5}{ 647499.9311999984}{0.5}{666358.6828799981}{0.5}{580813.0108799983} & \makecell[tl]{a : 0.333333333333333 \\ b : 0.329} \\
    \bottomrule
    \multicolumn{5}{l}{\small $^a$MINLPLib, $^b$COCONUT/GlobalLib, $^c$COCONUT/CUTE, $^d$COCONUT/CSTP, $^*$bounds assigned} \\
 \end{tabular}}
 \end{table}

\begin{table}
\caption{4D functions in computational study.}
\label{tab:4DconvexFunctionListMinlplibWithCOCONUT}
 \adjustbox{width=\linewidth}{
 \begin{tabular}{ l l l l l} 
  \toprule
  \multirow{2}{*}{Problem}& \multirow{2}{*}{Objective/Constraint} & \multirow{2}{*}{Expression}& \multirow{2}{*}{Bounds on variables} & \multirow{2}{*}{Parameter Values}\\ [0.5ex]
  & & & & \\
  \midrule
	p\_ball\_10b\_5p\_3d\_h$^a$ & e62, 1st term & $f(x_1, x_2, x_3, x_4) = (ax_2 + b)\left(c(\frac{-dx_1}{ax_2 + b} + 1)^2 + e(\frac{-fx_3}{ax_2 + b} + 1)^2 + (\frac{-x_4}{ax_2 + b} + g)^2 - 1\right)$ & \makecell[tl]{$x_1^L$ : 0 \\ $x_1^U$ : 10 \\ $x_2^L$ : 0 \\ $x_2^U$ : 1 \\ $x_3^L$ : 0 \\ $x_3^U$ : 10 \\ $x_4^L$ : 0 \\ $x_4^U$ : 10} &  \makecell[tl]{a : 0.9999 \\ b : 0.0001 \\ c : 77.8089905885703 \\ d : 0.113366596747911 \\ e : 90.5954335749978 \\ f : 0.10506228598212 \\ g : 0.894770759747333}\\ \hline
    pspdoc$^c$ & objcon, entire expression & $f(x_1, x_2, x_3, x_4) = \sqrt{x_1^2 + (x_2 - x_3)^2 + 1} + \sqrt{x_2^2 + (x_3 - x_4)^2 + 1}$ & \makecell[tl]{$x_1^{L*}$ : -10 \\ $x_1^U$ : 1 \\ $x_2^{L*}$ : -10 \\ $x_2^{U*}$ : 10 \\ $x_3^{L*}$ : -10 \\ $x_3^{U*}$ : 10 \\ $x_4^{L*}$ : -10 \\ $x_4^{U*}$ : 10} & \\ \hline
    weapon$^d$ & con13, 13th term & $f(x_1, x_2, x_3, x_4) = a\text{exp}(-bx_4-cx_3-dx_2-ex_1)$ & \makecell[tl]{$x_1^{L}$ : 0 \\ $x_1^{U*}$ : 8 \\ $x_2^{L}$ : 0 \\ $x_2^{U*}$ : 8 \\ $x_3^{L}$ : 0 \\ $x_3^{U*}$ : 8 \\ $x_4^{L}$ : 0 \\ $x_4^{U*}$ : 8} & \makecell[tl]{a : 50 \\ b : 0.05129329438755058 \\ c : 0.18632957819149348 \\ d : 0.05129329438755058 \\ e : 0.06187540371808753} \\ \hline
    \bottomrule
    \multicolumn{5}{l}{\small $^a$MINLPLib, $^b$COCONUT/GlobalLib, $^c$COCONUT/CUTE, $^d$COCONUT/CSTP, $^*$bounds assigned} \\
 \end{tabular}}
 \end{table}

\section*{Acknowledgments}
We acknowledge financial support from Mitsubishi Electric Research Labs (MERL) through the Center for Advanced Process Decision-making (CAPD) at Carnegie Mellon University. William Strahl also gratefully acknowledges support from the R.R.~Rothfus Graduate Fellowship in Chemical Engineering and the Chevron Graduate Fellowship in Chemical Engineering.

\bibliographystyle{plain}
\bibliography{StrahlEtal_1_Convex}
\end{document}